
\documentclass[preprint,10pt]{elsarticle}



\usepackage{amsmath}
\usepackage{amssymb}
\usepackage{amsthm}

\newtheorem{thm}{Theorem}[section]
\newtheorem{cor}[thm]{Corollary}
\newtheorem{lem}[thm]{Lemma}

\newtheorem{defin}[thm]{Definition}
\newtheorem{rem}[thm]{Remark}
\numberwithin{equation}{section}

\journal{}

\begin{document}

\begin{frontmatter}



\title{A new estimate for homogeneous fractional integral operators on the weighted Morrey space $L^{p,\kappa}$ when $\alpha p=(1-\kappa)n$}


\author{Jingliang Du and Hua Wang \footnote{In memory of Li Xue. E-mail address: 418419773@qq.com, wanghua@pku.edu.cn.}}
\address{School of Mathematics and Systems Science, Xinjiang University,\\
Urumqi 830046, P. R. China}

\begin{abstract}
For any $0<\alpha<n$, the homogeneous fractional integral operator $T_{\Omega,\alpha}$ is defined by
\begin{equation*}
T_{\Omega,\alpha}f(x)=\int_{\mathbb R^n}\frac{\Omega(x-y)}{|x-y|^{n-\alpha}}f(y)\,dy.
\end{equation*}
In this paper, we prove that if $\Omega$ satisfies certain Dini smoothness conditions on $\mathbf{S}^{n-1}$, then $T_{\Omega,\alpha}$ is bounded from $L^{p,\kappa}(w^p,w^q)$ (weighted Morrey space) to $\mathrm{BMO}(\mathbb R^n)$.
\end{abstract}

\begin{keyword}
Homogeneous fractional integral operator \sep Dini smoothness condition \sep weighted Morrey space \sep BMO space
\MSC[2020] Primary 42B20 \sep 42B25 Secondary 47G10
\end{keyword}

\end{frontmatter}

\section{Introduction}
\label{}
One of the most significant operators in harmonic analysis is the Riesz potential operator $I_{\alpha}$ of order $\alpha$, also known as the fractional integral operator. Let $\mathbb R^n$ be the $n$-dimensional Euclidean space endowed with the Lebesgue measure $dx$ and the Euclidean norm $|\cdot|$. Given $\alpha\in(0,n)$, the Riesz potential operator $I_{\alpha}$ of order $\alpha$ is defined by
\begin{equation*}
I_{\alpha}f(x):=\frac{1}{\gamma(\alpha)}\int_{\mathbb R^n}\frac{f(y)}{|x-y|^{n-\alpha}}dy,\quad x\in\mathbb R^n,
\end{equation*}
where $\gamma(\alpha)=\frac{2^{\alpha}\pi^{n/2}\Gamma(\alpha/2)}{\Gamma({(n-\alpha)}/2)}$ and $\Gamma(\cdot)$ being the usual gamma function. Let $(-\Delta)^{\alpha/2}$(under $0<\alpha<n$) denote the $\alpha/2$-th order Laplacian. Then $u=I_{\alpha}f$ is viewed as a solution of the $\alpha/2$-th order Laplace equation
\begin{equation*}
(-\Delta)^{\alpha/2}u=f
\end{equation*}
in the sense of the Fourier transform; i.e., $(-\Delta)^{\alpha/2}$ exists as the inverse of $I_{\alpha}$. It is well known that the Riesz potential operator $I_{\alpha}$ plays an important role in harmonic analysis, PDE and potential theory. The classical Hardy--Littlewood--Sobolev theorem states that $I_{\alpha}$ is bounded from $L^p(\mathbb R^n)$ to $L^q(\mathbb R^n)$ when $1<p<q<\infty$ and $1/q=1/p-\alpha/n$. For $p=1$, we know that $I_{\alpha}$ is bounded from $L^1(\mathbb R^n)$ to $WL^q(\mathbb R^n)$ when $0<\alpha<n$ and $q=n/{(n-\alpha)}$. Moreover, for the critical index $p=n/{\alpha}$, it is easy to verify that $I_{\alpha}$ is not bounded from $L^{n/{\alpha}}(\mathbb R^n)$ to $L^{\infty}(\mathbb R^n)$ (see \cite[p.119]{stein}). In this case, we also know that as a substitute the Riesz potential operator $I_{\alpha}$ is bounded from $L^{n/{\alpha}}(\mathbb R^n)$ to $\mathrm{BMO}(\mathbb R^n)$ (see \cite[p.130]{duoand}). For each locally integrable function $f$, closely related to the Riesz potential operator is the fractional maximal operator $M_{\alpha}$ on $\mathbb R^n$, which is defined by
\begin{equation*}
M_{\alpha}f(x):=\sup_{r>0}\frac{1}{|B(x,r)|^{1-\alpha/n}}\int_{|x-y|<r}|f(y)|\,dy,\quad x\in\mathbb R^n,
\end{equation*}
where the supremum is taken over all $r>0$. It can be shown that $M_{\alpha}$ satisfies the same norm inequalities as $I_{\alpha}$ for $0<\alpha<n$, $1<p<n/{\alpha}$ and $1/q=1/p-\alpha/n$. The weak type $(1,q)$ inequality can be proved by using covering lemma arguments when $q=n/{(n-\alpha)}$ (see, for example, \cite[Theorem 2]{muckenhoupt2}), and $M_{\alpha}$ is clearly of strong type $(p,\infty)$ when $p=n/{\alpha}$. Actually, by H\"{o}lder's inequality
\begin{equation*}
\frac{1}{|B(x,r)|^{1-\alpha/n}}\int_{|x-y|<r}|f(y)|\,dy\leq\bigg(\int_{|x-y|<r}|f(y)|^p\,dy\bigg)^{1/p}\leq\|f\|_{L^p}.
\end{equation*}
The strong type $(p,q)$ inequalities then follow by Marcinkiewicz interpolation theorem.

In 1974, Muckenhoupt and Wheeden studied the weighted boundedness of $I_{\alpha}$ when $0<\alpha<n$. A weight $w$ is a nonnegative, locally integrable function on $\mathbb R^n$ that take values in $(0,\infty)$ almost everywhere. Given a weight $w$ and a measurable set $E\subset\mathbb R^n$, we use the notation
\begin{equation*}
w(E):=\int_{E}w(x)\,dx
\end{equation*}
to denote the $w$-measure of the set $E$. The complement of the set $E$ is denoted by $E^{\complement}$, and the characteristic function of the set $E$ is denoted by $\chi_{E}$: $\chi_{E}(x)=1$ if $x\in E$ and 0 if $x\notin E$. For given $x_0\in\mathbb R^n$ and $r>0$, $B(x_0,r)$ denotes the open ball centered at $x_0$ with radius $r$. Given a ball $B$ and $\lambda>0$, $\lambda B$ denotes the ball with the same center as $B$ whose radius is $\lambda$ times that of $B$. Following \cite{muckenhoupt2}, let us give the definitions of some weight classes.
\begin{defin}[\cite{muckenhoupt2}]
A weight function $w$ is said to belong to the Muckenhoupt--Wheeden class $A(p,q)$ for $1<p<q<\infty$, if there exists a constant $C>0$ such that
\begin{equation*}
\bigg(\frac{1}{|B|}\int_{B}w(x)^q\,dx\bigg)^{1/q}\bigg(\frac{1}{|B|}\int_{B}w(x)^{-p'}\,dx\bigg)^{1/{p'}}\leq C<\infty,
\end{equation*}
for every ball $B$ in $\mathbb R^n$, where $p'$ denotes the conjugate exponent of $p>1$ such that $1/{p'}+1/p=1$, and $p'=1$ when $p=\infty$. When $p=1$, a weight function $w$ is in the Muckenhoupt--Wheeden class $A(1,q)$ with $1<q<\infty$, if there exists a constant $C>0$ such that
\begin{equation*}
\bigg(\frac{1}{|B|}\int_{B}w(x)^q\,dx\bigg)^{1/q}\bigg(\underset{x\in B}{\mathrm{ess\,sup}}\;\frac{1}{w(x)}\bigg)\leq C<\infty,
\end{equation*}
for every ball $B$ in $\mathbb R^n$. A weight function $w$ is said to belong to the Muckenhoupt--Wheeden class $A(p,\infty)$ with $1<p<\infty$, if there exists a constant $C>0$ such that
\begin{equation*}
\bigg(\underset{x\in B}{\mathrm{ess\,sup}}\;w(x)\bigg)\bigg(\frac{1}{|B|}\int_{B}w(x)^{-p'}\,dx\bigg)^{1/{p'}}\leq C<\infty,
\end{equation*}
for every ball $B$ in $\mathbb R^n$.
\end{defin}
The weighted Lebesgue spaces and weak spaces with respect to the measure $w(x)\,dx$ are denoted by $L^p(w)$ and $WL^p(w)$ with $1\leq p<\infty$, respectively. In 1974, Muckenhoupt and Wheeden proved the following strong type and weak type estimates of $M_{\alpha}$ and $I_{\alpha}$ (see \cite[Theorems 2 through 5]{muckenhoupt2}).
\newtheorem*{thma}{Theorem A}
\begin{thma}[\cite{muckenhoupt2}]
Let $0<\alpha<n$, $1<p<n/{\alpha}$ and $1/q=1/p-{\alpha}/n$. If $w\in A(p,q)$, then the operators $M_{\alpha}$ and $I_{\alpha}$ are bounded from $L^p(w^p)$ to $L^q(w^q)$. If $p=1$ and $w\in A(1,q)$ with $q=n/{(n-\alpha)}$, then the operators $M_{\alpha}$ and $I_{\alpha}$ are bounded from $L^1(w)$ to $WL^q(w^q)$.
\end{thma}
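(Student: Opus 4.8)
The plan is to establish the estimates for $M_{\alpha}$ by a direct covering/Hölder argument and then transfer them to $I_{\alpha}$, exploiting two structural facts. First, the normalization $1/q=1/p-\alpha/n$ is exactly what makes every power of $|B|$ that arises cancel, so the $A(p,q)$ condition can be inserted at no cost. Second, $w\in A(p,q)$ is equivalent to $w^q\in A_{1+q/p'}$, so $\mu:=w^q\,dx$ is a doubling $A_{\infty}$ measure satisfying a reverse Hölder inequality; this is what drives the passage from $M_{\alpha}$ to $I_{\alpha}$. The pointwise bound $M_{\alpha}f(x)\le C\,I_{\alpha}(|f|)(x)$ already shows that maximal estimates are implied by potential estimates, but the useful direction is the reverse \emph{norm} control, which I would obtain from a good-$\lambda$ inequality.

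For $M_{\alpha}$ I would first prove the weak $(1,q)$ bound directly. Given $\lambda>0$, cover $\{M_{\alpha}f>\lambda\}$ by balls $B$ with $|B|^{-(1-\alpha/n)}\int_{B}|f|>\lambda$ and extract a disjoint subfamily $\{B_j\}$ by Vitali's lemma with $\{M_{\alpha}f>\lambda\}\subset\bigcup_j 5B_j$. On each $B_j$ the $A(1,q)$ condition gives $\big(|B_j|^{-1}\int_{B_j}w^q\big)^{1/q}\le C\,\mathrm{ess\,inf}_{B_j}w$; combining this with $\lambda|B_j|^{1-\alpha/n}<\int_{B_j}|f|\le(\mathrm{ess\,inf}_{B_j}w)^{-1}\int_{B_j}|f|w$ and the identity $1/q=1-\alpha/n$ yields $w^q(5B_j)^{1/q}\le C\lambda^{-1}\int_{B_j}|f|w$. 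Summing over the disjoint $B_j$ (using $q\ge1$) gives $\|M_{\alpha}f\|_{WL^q(w^q)}\le C\|f\|_{L^1(w)}$.

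For the strong $(p,q)$ bound I would apply Hölder on each ball together with $A(p,q)$ and let the powers of $|B|$ cancel, obtaining
\[
M_{\alpha}f(x)\le C\big(M_{\mu,\beta}h(x)\big)^{1/p},\qquad M_{\mu,\beta}h(x)=\sup_{B\ni x}\frac{1}{w^q(B)^{p/q}}\int_B|f|^pw^p\,dx,
\]
with $\mu=w^q\,dx$, $\beta=1-p/q$ and $h=|f|^pw^{p-q}$, so that $\int_B h\,d\mu=\int_B|f|^pw^p\,dx$ and $\|h\|_{L^1(\mu)}=\|f\|_{L^p(w^p)}^p$. Since $\mu$ is doubling, $M_{\mu,\beta}$ obeys the Hardy--Littlewood--Sobolev inequality on the space of homogeneous type $(\mathbb R^n,\mu)$, which already gives a weak $(p,q)$ bound for $M_{\alpha}$. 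To upgrade to strong type I would use the self-improvement (openness) of $A(p,q)$ to find exponents $p_0<p<p_1$ with $w\in A(p_i,q_i)$, $1/q_i=1/p_i-\alpha/n$, establish the weak bounds there, and interpolate: writing $F=fw$ turns each domain norm $\|f\|_{L^{p_i}(w^{p_i})}$ into the unweighted $\|F\|_{L^{p_i}}$, so Marcinkiewicz interpolation for the sublinear map $F\mapsto w\,M_{\alpha}(w^{-1}F)$ between $(p_0,q_0)$ and $(p_1,q_1)$ delivers the strong $(p,q)$ estimate for $M_{\alpha}$.

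Finally I would pass to $I_{\alpha}$ through a good-$\lambda$ inequality of Muckenhoupt--Wheeden type: since $w^q\in A_{\infty}$, one has $w^q(\{I_{\alpha}f>2\lambda,\ M_{\alpha}f\le\gamma\lambda\})\le C\gamma^{\delta}\,w^q(\{I_{\alpha}f>\lambda\})$ for some $\delta>0$ and all small $\gamma$, which after integration in $\lambda$ (and a truncation argument ensuring the left side is a priori finite) yields $\|I_{\alpha}f\|_{L^q(w^q)}\le C\|M_{\alpha}f\|_{L^q(w^q)}$; combined with the maximal estimate this gives the strong $(p,q)$ bound for $I_{\alpha}$. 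The weak $(1,q)$ bound for $I_{\alpha}$ then follows from the distributional weak-type analogue of this comparison and the weak $(1,q)$ bound for $M_{\alpha}$. I expect the main obstacle to be precisely this good-$\lambda$ comparison: establishing it for the fractional (rather than singular) operator requires a careful local decomposition exploiting the size of $M_{\alpha}f$, and its weak-type endpoint at $p=1$, together with the truncation justifying finiteness of $\|I_{\alpha}f\|_{L^q(w^q)}$, are the most delicate points. An alternative that makes the $A_{\infty}$ mechanism more transparent would be to replace the good-$\lambda$ step by a dyadic/sparse domination of $I_{\alpha}$.
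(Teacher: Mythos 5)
The paper does not actually prove Theorem A: it is quoted as background from Muckenhoupt--Wheeden \cite{muckenhoupt2} (Theorems 2 through 5 there), so your proposal can only be measured against the classical argument. Your outline reconstructs that classical route quite faithfully. The Vitali covering argument for the weak $(1,q)$ bound of $M_{\alpha}$ is correct as written: the identity $1/q=1-\alpha/n$ and the $A(1,q)$ condition enter exactly where they should, passing from $B_j$ to $5B_j$ is justified because $w\in A(1,q)$ gives $w^q\in A_1$, hence doubling (one should run the selection on compact subsets of $\{M_{\alpha}f>\lambda\}$ to avoid unbounded radii, but that is routine). The H\"older-plus-$A(p,q)$ computation with the exponent cancellation $1/{p'}+1/q=1-\alpha/n$ is the same cancellation the present paper itself exploits in \eqref{cal}, and your equivalence $w\in A(p,q)\Leftrightarrow w^q\in A_{1+q/{p'}}$ is correct. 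The passage from $M_{\alpha}$ to $I_{\alpha}$ via a good-$\lambda$ inequality under $w^q\in A_{\infty}$, together with its weak-type analogue at $p=1$, is precisely the Muckenhoupt--Wheeden mechanism; you assert rather than prove it, but you correctly flag it as the delicate point, and it is the actual content of the cited source.

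There is, however, one step that fails as written: the upgrade from weak to strong type for $M_{\alpha}$ by ``Marcinkiewicz interpolation for the sublinear map $F\mapsto w\,M_{\alpha}(w^{-1}F)$''. Conjugation by $w$ converts weighted norms into unweighted ones only at the level of strong norms, where $\|wg\|_{L^q(dx)}=\|g\|_{L^q(w^q dx)}$; for weak norms it does not, since $\sup_{\lambda>0}\lambda^q\big|\{x:w(x)|g(x)|>\lambda\}\big|$ and $\sup_{\lambda>0}\lambda^q\,w^q\big(\{x:|g(x)|>\lambda\}\big)$ are genuinely different quantities, and what your endpoint estimates provide is the latter. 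Worse, the two endpoint targets carry \emph{different} measures, $w^{q_0}dx$ and $w^{q_1}dx$, whereas plain Marcinkiewicz interpolation requires a single fixed measure on the target, so no reformulation of the conjugation trick rescues the step. Two standard repairs are available: (i) apply the Stein--Weiss/Marcinkiewicz interpolation theorem \emph{with change of measures} directly to the two weak estimates $M_{\alpha}\colon L^{p_i}(w^{p_i})\to WL^{q_i}(w^{q_i})$ --- because all weights are powers of the same $w$, the intermediate weights it produces are exactly $w^p$ and $w^q$; or (ii) bypass interpolation altogether, as Muckenhoupt and Wheeden do, deriving the strong type for $M_{\alpha}$ directly from the reverse H\"older inequality enjoyed by $w^q\in A_{\infty}$. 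With either repair in place of your interpolation paragraph, the remainder of your scheme (pointwise domination $M_{\alpha}f\leq C\big(M_{\mu,\beta}h\big)^{1/p}$ with $\mu=w^q dx$, then the good-$\lambda$ comparison for $I_{\alpha}$) is a sound reconstruction of the cited proof.
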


\begin{defin}[\cite{john,muckenhoupt2}]
A locally integrable function $\hbar$ is said to be in the bounded mean oscillation space $\mathrm{BMO}(\mathbb R^n)$, if
\begin{equation*}
\|\hbar\|_*:=\sup_{B}\frac{1}{|B|}\int_B|\hbar(x)-\hbar_B|\,dx<\infty,
\end{equation*}
where $\hbar_B$ stands for the average of $\hbar$ on $B$; i.e., $\hbar_B:=\frac{1}{|B|}\int_B \hbar(y)\,dy$ and the supremum is taken over all balls $B$ in $\mathbb R^n$. For a weight function $w$ on $\mathbb R^n$, the weighted version of $\mathrm{BMO}$ is denoted by $\mathrm{BMO}(w)$. We say that $\hbar$ is in the weighted space $\mathrm{BMO}(w)$, if there is a constant $C>0$ such that for any ball $B$ in $\mathbb R^n$,
\begin{equation*}
\bigg(\underset{x\in B}{\mathrm{ess\,sup}}\;w(x)\bigg)\bigg(\frac{1}{|B|}\int_B|\hbar(x)-\hbar_B|\,dx\bigg)\leq C<\infty.
\end{equation*}
\end{defin}
Moreover, for the critical index $p=n/{\alpha}$, Muckenhoupt and Wheeden proved the following result (see \cite[Theorem 7]{muckenhoupt2}).
\newtheorem*{thmb}{Theorem B}
\begin{thmb}[\cite{muckenhoupt2}]
Let $0<\alpha<n$ and $p=n/{\alpha}$. If $w\in A(p,\infty)$, then the Riesz potential operator $I_{\alpha}$ is bounded from $L^p(w^p)$ to $\mathrm{BMO}(w)$.
\end{thmb}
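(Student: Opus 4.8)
The plan is to verify the defining inequality of $\mathrm{BMO}(w)$ directly on an arbitrary ball. Fix $B=B(x_0,r)$ and split $f=f_1+f_2$, where $f_1=f\chi_{2B}$ and $f_2=f\chi_{(2B)^{\complement}}$. Since the mean oscillation is insensitive to the subtracted constant, I would take $c_B:=I_{\alpha}f_2(x_0)$ and estimate
\[
\frac{1}{|B|}\int_B|I_{\alpha}f(x)-c_B|\,dx\le\frac{1}{|B|}\int_B|I_{\alpha}f_1(x)|\,dx+\frac{1}{|B|}\int_B|I_{\alpha}f_2(x)-I_{\alpha}f_2(x_0)|\,dx,
\]
multiply each piece by $\mathrm{ess\,sup}_{x\in B}w(x)$, and bound the result by $C\|f\|_{L^p(w^p)}$. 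Throughout, the critical relation $p=n/\alpha$ enters through the two identities $n/p=\alpha$ and $n/p'=n-\alpha$, which force all powers of $r$ to cancel exactly.

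For the \emph{local part} I would apply Fubini's theorem to write $\frac{1}{|B|}\int_B|I_{\alpha}f_1|\le\frac{1}{|B|}\int_{2B}|f(y)|\bigl(\int_B|x-y|^{\alpha-n}\,dx\bigr)dy$; for $y\in 2B$ one has $|x-y|<3r$, so the inner integral is at most $\int_{|z|\le 3r}|z|^{\alpha-n}\,dz=Cr^{\alpha}$, whence the local term is dominated by $Cr^{\alpha-n}\int_{2B}|f|$. Splitting $|f|=(|f|\,w)\cdot w^{-1}$, applying Hölder with exponents $p,p'$, and invoking the $A(p,\infty)$ condition on $2B$ gives $\int_{2B}|f|\le C\|f\|_{L^p(w^p)}\,r^{n/p'}\big/\mathrm{ess\,sup}_{2B}w$. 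Since $(\alpha-n)+n/p'=0$, the factors $r^{\alpha-n}$ and $r^{n/p'}$ cancel, leaving $\mathrm{ess\,sup}_Bw\cdot\frac{1}{|B|}\int_B|I_{\alpha}f_1|\le C\|f\|_{L^p(w^p)}\,(\mathrm{ess\,sup}_Bw/\mathrm{ess\,sup}_{2B}w)\le C\|f\|_{L^p(w^p)}$, where the last step uses the trivial monotonicity $\mathrm{ess\,sup}_Bw\le\mathrm{ess\,sup}_{2B}w$.

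For the \emph{far part} I would use the mean-value estimate: for $x\in B$ and $y\in(2B)^{\complement}$ one has $|x-x_0|<r\le\tfrac12|x_0-y|$, so $\bigl||x-y|^{\alpha-n}-|x_0-y|^{\alpha-n}\bigr|\le C\,r\,|x_0-y|^{\alpha-n-1}$, giving $|I_{\alpha}f_2(x)-I_{\alpha}f_2(x_0)|\le Cr\int_{|x_0-y|\ge 2r}|f(y)|\,|x_0-y|^{\alpha-n-1}\,dy$. Decomposing this domain into the annuli $2^{j+1}B\setminus 2^jB$ ($j\ge 1$), on which $|x_0-y|\approx 2^jr$, and applying the same Hölder-plus-$A(p,\infty)$ estimate on each $2^{j+1}B$, the exact cancellation of the $r$ powers (again via $n/p'=n-\alpha$) makes each annular term contribute $C\|f\|_{L^p(w^p)}\,2^{-j}\big/\mathrm{ess\,sup}_{2^{j+1}B}w$. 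Summing the geometric series and multiplying by $\mathrm{ess\,sup}_Bw$ yields $\mathrm{ess\,sup}_Bw\cdot|I_{\alpha}f_2(x)-I_{\alpha}f_2(x_0)|\le C\|f\|_{L^p(w^p)}\sum_{j\ge1}2^{-j}(\mathrm{ess\,sup}_Bw/\mathrm{ess\,sup}_{2^{j+1}B}w)\le C\|f\|_{L^p(w^p)}$, once more using monotonicity of the ess sup. The same annular bound simultaneously shows that $c_B=I_{\alpha}f_2(x_0)$ is finite, so the decomposition is legitimate.

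I expect the only genuine subtlety to be the bookkeeping of exponents so that the powers of $r$ cancel at the critical index; once $n/p'=n-\alpha$ is in hand, both pieces reduce to a convergent geometric series. The role of the class $A(p,\infty)$ is precisely to convert $\int_{2^{j+1}B}w^{-p'}$ into the reciprocal of $\mathrm{ess\,sup}\,w$, which is exactly the weight that the $\mathrm{BMO}(w)$ norm places in front, and the monotonicity $\mathrm{ess\,sup}_Bw\le\mathrm{ess\,sup}_{2^{j+1}B}w$ then disposes of all the weight ratios harmlessly.
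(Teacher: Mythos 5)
The paper itself offers no proof of Theorem B: it is quoted verbatim from Muckenhoupt--Wheeden \cite{muckenhoupt2} (their Theorem 7). The natural internal comparison is therefore with the paper's proof of Theorem \ref{thm2}, and your argument is structurally the same scheme: split $f$ over a dilate of $B$, treat the local piece by Fubini, polar coordinates and H\"older plus the weight condition, and treat the far piece on dyadic annuli using smoothness of the kernel. Your mean-value inequality $\bigl||x-y|^{\alpha-n}-|x_0-y|^{\alpha-n}\bigr|\leq C\,r\,|x_0-y|^{\alpha-n-1}$ is exactly what Lemma \ref{lem1} reduces to when $\Omega\equiv 1$ (so that $\omega_s\equiv 0$), and your exponent bookkeeping at the critical index ($n/{p'}=n-\alpha$, giving the $2^{-j}$ gain on each annulus) is correct, as is the use of $A(p,\infty)$ to convert $\bigl(\frac{1}{|2^{j+1}B|}\int_{2^{j+1}B}w^{-p'}\bigr)^{1/{p'}}$ into $C(\mathrm{ess\,sup}_{2^{j+1}B}\,w)^{-1}$ and the monotonicity of the essential supremum to absorb the weight ratios.

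There is one genuinely false step: the closing claim that ``the same annular bound simultaneously shows that $c_B=I_{\alpha}f_2(x_0)$ is finite.'' The $2^{-j}$ gain is present only for the \emph{difference} of kernels; for the single value the annular estimate yields $\int_{2^{j+1}B\setminus 2^jB}|x_0-y|^{\alpha-n}|f(y)|\,dy\leq C\|f\|_{L^p(w^p)}\bigl(\mathrm{ess\,sup}_{2^{j+1}B}\,w\bigr)^{-1}$ with no decay in $j$, and the resulting series diverges already for $w\equiv 1\in A(p,\infty)$. Indeed, at $p=n/\alpha$ one has $(\alpha-n)p'=-n$, so the tail integral diverges logarithmically in general: $f(y)=|y|^{-\alpha}(\log|y|)^{-1}\chi_{\{|y|>e\}}$ lies in $L^{n/\alpha}(\mathbb R^n)$ but makes $I_{\alpha}f_2(x_0)=+\infty$. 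This is the standard critical-index phenomenon and the repair is routine: either prove the uniform oscillation bound first for bounded, compactly supported $f$ (for which $c_B$ is finite) and extend, or avoid $c_B$ altogether by working directly with the absolutely convergent difference kernel $\int\bigl(|x-y|^{\alpha-n}-|x_0-y|^{\alpha-n}\bigr)f_2(y)\,dy$, i.e.\ interpret $I_{\alpha}f$ modulo constants --- which is in effect what the paper does in the proof of Theorem \ref{thm2} by estimating the double average $\frac{1}{|B|^2}\int_B\int_B|T_{\Omega,\alpha}f_2(x)-T_{\Omega,\alpha}f_2(y)|\,dy\,dx$ instead of comparing with a value at the center. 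With that emendation, your proof is complete and is essentially the classical Muckenhoupt--Wheeden argument.
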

We now introduce the function space $L^{\infty}(w)$, the weighted version of $L^{\infty}(\mathbb R^n)$. We say that a measurable function $\hslash$ belongs to the weighted space $L^{\infty}(w)$, if there is a constant $C>0$ such that for any ball $B$ in $\mathbb R^n$,
\begin{equation*}
\bigg(\underset{x\in B}{\mathrm{ess\,sup}}\;w(x)\bigg)\Big(\big\|\hslash\big\|_{L^{\infty}(B)}\Big)\leq C<\infty,
\end{equation*}
where
\begin{equation*}
\big\|\hslash\big\|_{L^{\infty}(B)}:=\underset{x\in B}{\mbox{ess\,sup}}\;|\hslash(x)|=\inf\big\{M>0:\big|\{x\in E:|\hslash(x)|>M\}\big|=0\big\}.
\end{equation*}
We prove in this paper that the strong type $(p,\infty)$ result remains valid for $M_{\alpha}$ in our weighted spaces, under the assumptions that $w\in A(p,\infty)$ and $p=n/{\alpha}$. Actually, by H\"{o}lder's inequality and the condition $w\in A(p,\infty)$, we get
\begin{equation*}
\begin{split}
&\bigg(\underset{y\in B(x,r)}{\mathrm{ess\,sup}}\;w(y)\bigg)\cdot\frac{1}{|B(x,r)|^{1-\alpha/n}}\int_{|x-y|<r}|f(y)|\,dy\\
&\leq\bigg(\int_{B(x,r)}|f(y)|^pw(y)^{p}\,dy\bigg)^{1/p}
\bigg(\underset{y\in B(x,r)}{\mathrm{ess\,sup}}\;w(y)\bigg)\bigg(\frac{1}{|B(x,r)|}\int_{B(x,r)}w(y)^{-p'}\,dy\bigg)^{1/{p'}}\\
&\leq C\|f\|_{L^p(w^p)}.
\end{split}
\end{equation*}
Thus, the fractional maximal operator $M_{\alpha}$ is bounded from $L^p(w^p)$ to $L^{\infty}(w)$.

Let $\mathbf{S}^{n-1}=\{x\in\mathbb R^n:|x|=1\}$ be the unit sphere in $\mathbb R^n$ ($n\geq 2$) equipped with the normalized Lebesgue measure $d\sigma(x')$. Let $\alpha\in(0,n)$ and $\Omega$ be a homogeneous function of degree zero on $\mathbb R^n$ satisfying $\Omega\in L^{s}(\mathbf{S}^{n-1})$ with $s\geq1$.  We study the homogeneous fractional integral operator $T_{\Omega,\alpha}$, which is defined by
\begin{equation}
T_{\Omega,\alpha}f(x):=\int_{\mathbb R^n}\frac{\Omega(x-y)}{|x-y|^{n-\alpha}}f(y)\,dy,\quad x\in\mathbb R^n.
\end{equation}
Given any $\alpha\in(0,n)$ and $f\in L^1_{\mathrm{loc}}(\mathbb R^n)$, the fractional maximal function of $f$ with homogeneous kernel is defined by
\begin{equation}
M_{\Omega,\alpha}f(x):=\sup_{r>0}\frac{1}{|B(x,r)|^{1-\alpha/n}}\int_{|x-y|<r}|\Omega(x-y)||f(y)|\,dy,\quad x\in\mathbb R^n,
\end{equation}
where the supremum on the right-hand side is taken over all $r>0$.
\begin{description}
  \item[1] When $\alpha=0$ and $\Omega\equiv1$, $M_{\Omega,\alpha}$ is the standard Hardy--Littlewood maximal operator on $\mathbb R^n$, which is denoted by $M$;
  \item[2] When $\Omega\equiv1$, $M_{\Omega,\alpha}$($T_{\Omega,\alpha}$) is just the fractional maximal (integral) operator $M_{\alpha}$($I_{\alpha}$) on $\mathbb R^n$.
\end{description}
In 1971, Muckenhoupt and Wheeden \cite{muckenhoupt1} studied weighted norm inequalities for $T_{\Omega,\alpha}$ with the weight $w(x)=|x|^\beta$. Some weak type estimates with power weights for $M_{\Omega,\alpha}$ and $T_{\Omega,\alpha}$ were obtained by Ding in \cite{ding3}. In 1998, Ding and Lu considered weighted norm inequalities for $M_{\Omega,\alpha}$ and $T_{\Omega,\alpha}$ with more general weights. More precisely, they proved the $(L^p(w^p),L^q(w^q))$ boundedness of the fractional maximal operator $M_{\Omega,\alpha}$ and the fractional integral operator $T_{\Omega,\alpha}$ (see \cite[Theorem 1 and Proposition 1]{ding1} and \cite[Chapter 3]{lu}).
\newtheorem*{thmc}{Theorem C}
\begin{thmc}[\cite{ding1}]
Let $0<\alpha<n$, $1\le s'<p<n/{\alpha}$ and $1/q=1/p-{\alpha}/n$. If $\Omega\in L^s(\mathbf{S}^{n-1})$ and $w^{s'}\in A(p/{s'},q/{s'})$, then the operators $M_{\Omega,\alpha}$ and $T_{\Omega,\alpha}$ are all bounded from $L^p(w^p)$ to $L^q(w^q)$.
\end{thmc}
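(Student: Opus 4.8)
The plan is to reduce the maximal operator $M_{\Omega,\alpha}$ to the classical fractional maximal operator $M_{\alpha s'}$ of order $\alpha s'$ by H\"older's inequality on the unit sphere, to invoke Theorem A at the rescaled exponents $p/s'$, $q/s'$ and weight $w^{s'}$, and then to pass from $M_{\Omega,\alpha}$ to $T_{\Omega,\alpha}$ by a good-$\lambda$ comparison. The exponent bookkeeping is arranged so that the hypotheses of Theorem A hold exactly: the relation $1/q=1/p-\alpha/n$ becomes $1/(q/s')=1/(p/s')-(\alpha s')/n$, the constraint $s'<p<n/\alpha$ becomes $1<p/s'<n/(\alpha s')$ (and $\alpha s'<\alpha p<n$, so $0<\alpha s'<n$), and $w^{s'}\in A(p/s',q/s')$ is precisely the given weight hypothesis. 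Moreover $(w^{s'})^{p/s'}=w^p$ and $(w^{s'})^{q/s'}=w^q$, so the target spaces match up.

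I would treat the maximal operator first. Applying H\"older with exponents $s$ and $s'$ to the average defining $M_{\Omega,\alpha}f$, the factor carrying $\Omega$ becomes, after polar coordinates and the degree-zero homogeneity, $\big(\int_{|x-y|<r}|\Omega(x-y)|^s\,dy\big)^{1/s}\le C\|\Omega\|_{L^s(\mathbf S^{n-1})}\,r^{n/s}$. Collecting the powers of $r$ (here $1/s+1/s'=1$ makes them combine cleanly) yields the pointwise estimate $M_{\Omega,\alpha}f(x)\le C\|\Omega\|_{L^s(\mathbf S^{n-1})}\big(M_{\alpha s'}(|f|^{s'})(x)\big)^{1/s'}$. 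Raising to the power $q$, integrating against $w^q$, and applying Theorem A to $M_{\alpha s'}$ with the data above gives at once $\|M_{\Omega,\alpha}f\|_{L^q(w^q)}\le C\|f\|_{L^p(w^p)}$.

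For the integral operator I would first pass to the positive operator $|T_{\Omega,\alpha}f(x)|\le \widetilde T_{\Omega,\alpha}|f|(x):=\int_{\mathbb R^n}|\Omega(x-y)|\,|x-y|^{\alpha-n}|f(y)|\,dy$. Decomposing into dyadic annuli $2^j\le|x-y|<2^{j+1}$ and reusing the same H\"older-on-the-sphere estimate bounds each annulus by a constant multiple of $\|\Omega\|_{L^s(\mathbf S^{n-1})}(M_{\alpha s'}(|f|^{s'})(x))^{1/s'}$; the catch is that the power of $2^j$ cancels identically, so the dyadic sum carries no decay and this route only reproduces the maximal bound. This is the main obstacle: the tail of $T_{\Omega,\alpha}$ cannot be absorbed by $M_{\Omega,\alpha}$ at the pointwise level. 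The remedy I would use is a good-$\lambda$ inequality of Muckenhoupt--Wheeden type, comparing the level sets of $\widetilde T_{\Omega,\alpha}|f|$ and $M_{\Omega,\alpha}f$ in the measure $w^q\,dx$, with the rough kernel handled by controlling its integrability over balls through $\|\Omega\|_{L^s(\mathbf S^{n-1})}$ and H\"older's inequality.

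The good-$\lambda$ step needs $w^q\in A_\infty$, and this is exactly where the hypothesis is spent: $w^{s'}\in A(p/s',q/s')$ is equivalent to $(w^{s'})^{q/s'}=w^q\in A_r$ for a finite index $r$, hence $w^q\in A_\infty$. The $A_\infty$ property upgrades the good-$\lambda$ estimate to $\|\widetilde T_{\Omega,\alpha}|f|\|_{L^q(w^q)}\le C\|M_{\Omega,\alpha}f\|_{L^q(w^q)}$, and combining this with the bound already established for $M_{\Omega,\alpha}$ closes the argument for $T_{\Omega,\alpha}$.
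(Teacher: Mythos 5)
Your treatment of the maximal operator is correct and is the standard argument: H\"older with exponents $s,s'$ over the ball plus the polar-coordinate computation (the same one as \eqref{omega} in this paper) gives the pointwise domination $M_{\Omega,\alpha}f(x)\leq C\|\Omega\|_{L^s(\mathbf{S}^{n-1})}\big(M_{\alpha s'}(|f|^{s'})(x)\big)^{1/{s'}}$, and your exponent bookkeeping ($\alpha s'<\alpha p<n$, $1/(q/{s'})=1/(p/{s'})-\alpha s'/n$, $(w^{s'})^{p/{s'}}=w^p$, $(w^{s'})^{q/{s'}}=w^q$) is exactly what is needed to invoke Theorem A at the rescaled data. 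Note, however, that Theorem C is quoted here from Ding--Lu \cite{ding1}; the paper contains no proof of it, so the comparison is with the original argument, which for $M_{\Omega,\alpha}$ coincides with yours.

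For $T_{\Omega,\alpha}$ there is a genuine gap: the good-$\lambda$ inequality $\|\widetilde T_{\Omega,\alpha}|f|\|_{L^q(w^q)}\leq C\|M_{\Omega,\alpha}f\|_{L^q(w^q)}$ for $w^q\in A_\infty$ is asserted, not proved, and the Muckenhoupt--Wheeden argument you appeal to does not survive a rough kernel. In the Whitney-cube proof of that inequality one must compare $K(x-z)=|\Omega(x-z)|\,|x-z|^{\alpha-n}$ with $K(\bar x-z)$ for $x,\bar x$ in a cube $Q$ and $z\notin 2Q$; for $I_\alpha$ this is immediate because the kernel is radial and decreasing, but for $\Omega$ merely in $L^s(\mathbf{S}^{n-1})$ the directions $(x-z)/|x-z|$ and $(\bar x-z)/|\bar x-z|$ differ and no pointwise comparison is available. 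If you instead estimate the kernel difference in $L^s$ on dyadic annuli by H\"older and the triangle inequality, you get a bound of size $(2^j\ell(Q))^{n/s-(n-\alpha)}$ with no decay in $j$ --- precisely the non-summability you yourself identified as the obstruction to the pointwise route; it reappears inside the good-$\lambda$ argument, so you have relocated the difficulty rather than removed it. Producing summable decay on annuli is exactly what an $L^s$-Dini smoothness condition buys (this is Lemma \ref{lem1}, which the present paper needs even for the BMO endpoint in Theorem \ref{thm2}), but Theorem C assumes no smoothness of $\Omega$ whatsoever, so the good-$\lambda$ step cannot be run this way; spending the hypothesis only through $w^q\in A_\infty$ discards the kernel information that the comparison actually requires. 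Indeed, Ding--Lu's original proof does not dominate $T_{\Omega,\alpha}$ by $M_{\Omega,\alpha}$ in weighted norms; the integral operator is handled by a direct argument exploiting the finer structure of the $A(p/{s'},q/{s'})$ condition, and the passage from maximal to integral operator for rough kernels is the real content of their theorem --- which your proposal leaves unproven.
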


\begin{defin}[\cite{ding2}]
We say that $\Omega$ satisfies the $L^s$-Dini smoothness condition $\mathfrak{D}_{s}$, if $\Omega\in L^s(\mathbf{S}^{n-1})$ is homogeneous of degree zero on $\mathbb R^n$ with $1\leq s<\infty$, and
\begin{equation*}
\int_0^1\omega_s(\delta)\frac{d\delta}{\delta}<\infty,
\end{equation*}
where $\omega_s(\delta)$ denotes the integral modulus of continuity of order $s$, which is defined by
\begin{equation*}
\omega_s(\delta):=\sup_{|\rho|<\delta}\bigg(\int_{\mathbf{S}^{n-1}}\big|\Omega(\rho x')-\Omega(x')\big|^sd\sigma(x')\bigg)^{1/s}
\end{equation*}
and $\rho$ is a rotation in $\mathbb R^n$ and $|\rho|:=\|\rho-I\|=\sup_{x'\in\mathbf{S}^{n-1}}\big|\rho x'-x'\big|$. We also say that $\Omega$ satisfies the $L^{\infty}$-Dini smoothness condition $\mathfrak{D}_{\infty}$, if $\Omega\in L^{\infty}(\mathbf{S}^{n-1})$ is homogeneous of degree zero on $\mathbb R^n$, and
\begin{equation*}
\int_0^1\omega_{\infty}(\delta)\frac{d\delta}{\delta}<\infty,
\end{equation*}
where $\omega_{\infty}(\delta)$ is defined by
\begin{equation*}
\omega_{\infty}(\delta):=\sup_{|\rho|<\delta,x'\in \mathbf{S}^{n-1}}\big|\Omega(\rho x')-\Omega(x')\big|,
\end{equation*}
where $x'=x/{|x|}$ for any $\mathbb R^n\ni x\neq0$.
\end{defin}

In 2002, Ding and Lu considered the critical case $p=n/{\alpha}$, and proved the weighted boundedness of $T_{\Omega,\alpha}$ from $L^p(w^p)$ to $\mathrm{BMO}(w)$ (see \cite[Theorem 1]{ding2}).
\newtheorem*{thmd}{Theorem D}
\begin{thmd}[\cite{ding2}]
Let $0<\alpha<n$ and $p=n/{\alpha}$. If $\Omega$ satisfies the $L^s$-Dini condition $\mathfrak{D}_{s}$ with $s>n/{(n-\alpha)}$ and $w^{s'}\in A(p/{s'},\infty)$, then the homogeneous fractional integral operator $T_{\Omega,\alpha}$ is bounded from $L^p(w^p)$ to $\mathrm{BMO}(w)$.
\end{thmd}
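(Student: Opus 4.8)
The plan is to fix an arbitrary ball $B=B(x_0,r)$ and to exhibit a constant $c_B$ (depending on $B$ and $f$) for which
\begin{equation*}
\Big(\operatorname*{ess\,sup}_{x\in B}w(x)\Big)\frac{1}{|B|}\int_B\big|T_{\Omega,\alpha}f(x)-c_B\big|\,dx\le C\|f\|_{L^p(w^p)},
\end{equation*}
with $C$ independent of $B$ and $f$; taking the supremum over all balls then bounds $\|T_{\Omega,\alpha}f\|_{\mathrm{BMO}(w)}$. I split $f=f_1+f_2$ with $f_1=f\chi_{2B}$ and $f_2=f\chi_{(2B)^{\complement}}$ and take $c_B=T_{\Omega,\alpha}f_2(x_0)$. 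Since $p=n/\alpha$ is critical this constant is only borderline convergent, so $T_{\Omega,\alpha}f$ is understood as an element of $\mathrm{BMO}(w)$ (i.e.\ modulo additive constants); what is genuinely convergent, and what I actually estimate, is the difference $T_{\Omega,\alpha}f_2(x)-T_{\Omega,\alpha}f_2(x_0)$. It then suffices to control the local average of $T_{\Omega,\alpha}f_1$ and the oscillation of $T_{\Omega,\alpha}f_2$.

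For the local part, fix $p_0$ with $s'<p_0<p$ (possible because $s>n/(n-\alpha)$ forces $s'<n/\alpha=p$) and let $1/q_0=1/p_0-\alpha/n$, so that $q_0=pp_0/(p-p_0)$. Constants belong to every class $A(p_0/s',q_0/s')$, so Theorem C yields the unweighted bound $\|T_{\Omega,\alpha}f_1\|_{L^{q_0}}\le C\|f_1\|_{L^{p_0}}$. Applying $\frac1{|B|}\int_B|g|\le\big(\frac1{|B|}\int_B|g|^{q_0}\big)^{1/q_0}$, then this bound, then Hölder's inequality with exponent $p/p_0$ to reinstate the weight (the dual exponent produces precisely the power $w^{-q_0}$), I obtain
\begin{equation*}
\frac1{|B|}\int_B\big|T_{\Omega,\alpha}f_1(x)\big|\,dx\le C\|f\|_{L^p(w^p)}\,|2B|^{-1/q_0}\Big(\int_{2B}w^{-q_0}\Big)^{1/q_0}.
\end{equation*}
Multiplying by $\operatorname*{ess\,sup}_{x\in B}w\le\operatorname*{ess\,sup}_{2B}w$ reduces matters to the uniform bound $\big(\operatorname*{ess\,sup}_{2B}w\big)\big(\frac1{|2B|}\int_{2B}w^{-q_0}\big)^{1/q_0}\le C$. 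Now $w^{s'}\in A(p/s',\infty)$ is equivalent to $w^{-\beta}\in A_1$ with $\beta:=ps'/(p-s')$, and $q_0\downarrow\beta$ as $p_0\downarrow s'$; choosing $p_0$ close enough to $s'$ that $q_0$ lies in the reverse-Hölder range of the $A_1$ weight $w^{-\beta}$, the required bound follows from the $A_1$ condition, finishing the local estimate.

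For the oscillation, note $|x-y|\sim|x_0-y|$ for $x\in B$, $y\in(2B)^{\complement}$, and write the integrand of $T_{\Omega,\alpha}f_2(x)-T_{\Omega,\alpha}f_2(x_0)$ through the kernel difference $\Omega(x-y)|x-y|^{-(n-\alpha)}-\Omega(x_0-y)|x_0-y|^{-(n-\alpha)}$, which I bound by $|\Omega(x-y)-\Omega(x_0-y)|\,|x-y|^{-(n-\alpha)}$ plus $|\Omega(x_0-y)|\,\big||x-y|^{-(n-\alpha)}-|x_0-y|^{-(n-\alpha)}\big|$. Decomposing $(2B)^{\complement}=\bigcup_{j\ge1}(2^{j+1}B\setminus2^jB)$ and using Hölder with exponents $s,s'$ on each annulus, the factor $\big(\int_{2^{j+1}B\setminus2^jB}|f|^{s'}\big)^{1/s'}$ is handled exactly as in the local step (Hölder with exponent $p/s'$ and the identity $s'(p/s')'=\beta$); since $w^{-\beta}\in A_1$ it is at most $C\|f\|_{L^p(w^p)}(2^jr)^{n/\beta}\big(\operatorname*{ess\,sup}_{2^{j+1}B}w\big)^{-1}$, and the prefactor $\operatorname*{ess\,sup}_Bw$ is absorbed because $\operatorname*{ess\,sup}_Bw\le\operatorname*{ess\,sup}_{2^{j+1}B}w$. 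For the radial piece the mean value theorem gives a gain $\le C2^{-j}$ (from $|x-x_0|\le r$, $|x_0-y|\ge2^jr$), which together with $\big(\int_{2^{j+1}B\setminus2^jB}|\Omega(x_0-y)|^s\big)^{1/s}\le C\|\Omega\|_{L^s(\mathbf{S}^{n-1})}(2^jr)^{n/s}$ and the arithmetic identity $n/s+n/\beta=n-\alpha$ (from $p=n/\alpha$ and $1/s+1/s'=1$) leaves a convergent series $\sum_j2^{-j}$. The $\Omega$-difference piece is the crux: it requires the annular estimate
\begin{equation*}
\Big(\int_{2^{j+1}B\setminus2^jB}\big|\Omega(x-y)-\Omega(x_0-y)\big|^s\,dy\Big)^{1/s}\le C\,(2^jr)^{n/s}\,\omega_s\!\big(c\,2^{-j}\big),
\end{equation*}
proved by expressing $\Omega(x-y)-\Omega(x_0-y)$ through the rotation carrying $(x_0-y)/|x_0-y|$ to $(x-y)/|x-y|$, whose norm is $\le C2^{-j}$, and invoking the definition of $\omega_s$. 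The same identity $n/s+n/\beta=n-\alpha$ then cancels every power of $2^j$, so this contribution is dominated by $C\|f\|_{L^p(w^p)}\sum_{j\ge1}\omega_s(c2^{-j})$, and the Dini condition $\mathfrak{D}_s$ gives $\sum_{j\ge1}\omega_s(c2^{-j})\le C\int_0^1\omega_s(\delta)\,\frac{d\delta}{\delta}<\infty$. Adding the two parts completes the proof.

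I expect the decisive step to be the annular $L^s$-estimate for $\Omega(x-y)-\Omega(x_0-y)$ in terms of $\omega_s$: this is exactly where the Dini smoothness $\mathfrak{D}_s$ (as opposed to mere $\Omega\in L^s$) enters, and it must be arranged so that, after Hölder and the $A_1$/$A(p/s',\infty)$ bookkeeping, all powers of $2^j$ cancel and only the Dini sum $\sum_j\omega_s(c2^{-j})$ survives. A secondary point, forced by the criticality $p=n/\alpha$, is that $T_{\Omega,\alpha}f$ is defined only modulo constants, so every estimate must be routed through the convergent difference $T_{\Omega,\alpha}f_2(x)-T_{\Omega,\alpha}f_2(x_0)$ rather than through $T_{\Omega,\alpha}f_2(x_0)$ itself.
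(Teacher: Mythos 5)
First, note that the paper does not actually prove this statement: Theorem~D is quoted from Ding--Lu, and the closest in-paper argument is the proof of Theorem~\ref{thm2}, which runs the same scheme in the Morrey setting. Measured against that, your proof is correct in substance and shares the skeleton (split $f=f_1+f_2$, local term plus oscillation of the tail against a constant, annular decomposition with H\"older in $(s,s')$, the exponent identity $n/s+n/\beta=n-\alpha$ with $\beta=ps'/(p-s')=s'(p/s')'$, and a Dini sum at the end); your observation that $w^{s'}\in A(p/{s'},\infty)$ is the same as $w^{-\beta}\in A_1$ is right and is exactly the bookkeeping behind the paper's computation \eqref{cal3}. Where you genuinely diverge is the local term: you route it through the unweighted Theorem~C bound on an auxiliary $L^{p_0}\to L^{q_0}$ scale plus the reverse H\"older inequality for the $A_1$ weight $w^{-\beta}$. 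That works (the reverse H\"older exponent is uniform over balls), but it is heavier than necessary: since $1/\beta=1/{s'}-1/p\le 1-1/p=1/{p'}$, one always has $\beta\ge p'$, so the direct route of the paper --- Fubini plus polar coordinates on the kernel (needing only $\|\Omega\|_{L^1(\mathbf{S}^{n-1})}\le C\|\Omega\|_{L^s(\mathbf{S}^{n-1})}$, as in \eqref{wang1}), then H\"older with exponent $p$ and Jensen to pass from $w^{-p'}$ to $w^{-\beta}$, then the $A(p/{s'},\infty)$ condition directly --- avoids both Theorem~C and reverse H\"older. Your choice $c_B=T_{\Omega,\alpha}f_2(x_0)$ with single differences, together with the explicit ``modulo additive constants'' caveat at the critical exponent, is a careful variant of the paper's use of $(T_{\Omega,\alpha}f_2)_B$ and a double average of differences; the paper is in fact silent on the convergence issue you flag.

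The one soft spot is your justification of the crux annular estimate. As stated --- ``express $\Omega(x-y)-\Omega(x_0-y)$ through the rotation carrying $(x_0-y)/|x_0-y|$ to $(x-y)/|x-y|$ and invoke the definition of $\omega_s$'' --- the argument does not go through directly, because that rotation varies with $y$, whereas $\omega_s(\delta)$ measures a \emph{single} rotation applied uniformly across $\mathbf{S}^{n-1}$; converting the annulus integral of a $y$-dependent angular perturbation into the sphere-modulus bound is precisely the nontrivial content of Kurtz--Wheeden's Lemma~5, i.e.\ Lemma~\ref{lem1} here, which the paper itself quotes without proof. That lemma yields the slightly sharper bound with $\frac{|x-x_0|}{2^jr}+\int_{|x-x_0|/{2^{j+1}r}}^{|x-x_0|/{2^jr}}\omega_s(\delta)\frac{d\delta}{\delta}$ in place of your $\omega_s(c2^{-j})$; your form follows from it by monotonicity of $\omega_s$, and your Dini sum $\sum_j\omega_s(c2^{-j})\le C\int_0^1\omega_s(\delta)\frac{d\delta}{\delta}$ is then fine, so the fix is simply to cite (or reprove) that lemma rather than the rotation shortcut. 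A last cosmetic point: with your split at $2B$ and annuli starting at $j=1$, the ratio $|x-x_0|/(2^jr)$ is only $<1/2$ on the first annulus, borderline for the lemma's hypothesis $|x|<\tau R$ with $\tau<1/2$; the paper uses $4B$ and $j\ge2$ to keep this ratio at most $1/4$, and you should do the same.
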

Let $L^{\infty}(w)$ denote the space of all essentially bounded measurable functions with respect to $w$ on $\mathbb R^n$. In this paper, we are able to prove that the fractional maximal operator $M_{\Omega,\alpha}$ is bounded from $L^p(w^p)$ to $L^{\infty}(w)$ when $0<\alpha<n$ and $\Omega\in L^s(\mathbf{S}^{n-1})$ with $s>n/{(n-\alpha)}$.

In 2009, Komori and Shirai \cite{komori} defined and investigated the weighted Morrey spaces $L^{p,\kappa}(\mu,\nu)$ which could be viewed as an extension of weighted Lebesgue spaces, and obtained the boundedness of some classical integral operators on these weighted spaces. Let us now give the definition of weighted Morrey spaces.

\begin{defin}[\cite{komori}]
Let $1\le p<\infty$ and $0<\kappa<1$. For two weights $\mu$ and $\nu$ on $\mathbb R^n$, the weighted Morrey space $L^{p,\kappa}(\mu,\nu)$ is defined by
\begin{equation*}
L^{p,\kappa}(\mu,\nu):=\Big\{f\in L^p_{\mathrm{loc}}(\mu):\|f\|_{L^{p,\kappa}(\mu,\nu)}<\infty\Big\},
\end{equation*}
where
\begin{equation*}
\|f\|_{L^{p,\kappa}(\mu,\nu)}:=\sup_{B}\bigg(\frac{1}{\nu(B)^{\kappa}}\int_B|f(x)|^p\mu(x)\,dx\bigg)^{1/p}
\end{equation*}
and the supremum is taken over all balls $B$ in $\mathbb R^n$. If $\mu=\nu$, then $L^{p,\kappa}(\mu,\nu)$ is simply denoted by $L^{p,\kappa}(\mu)$.
\end{defin}
It is known that $L^{p,\kappa}$ is an extension of $L^p$ in the sense that $L^{p,0}=L^p$. Komori and Shirai obtained the strong type estimates for $M_{\alpha}$ and $I_{\alpha}$ in weighted Morrey spaces based on Theorem A (see \cite[Theorems 3.5 and 3.6]{komori}).

\newtheorem*{thme}{Theorem E}
\begin{thme}[\cite{komori}]
If $0<\alpha<n$, $1<p<n/{\alpha}$, $1/q=1/p-\alpha/n$, $0<\kappa<p/q$ and $w\in A(p,q)$, then the fractional maximal operator $M_\alpha$ and the Riesz potential operator $I_{\alpha}$ are bounded from $L^{p,\kappa}(w^p,w^q)$ to $L^{q,{\kappa q}/p}(w^q)$.
\end{thme}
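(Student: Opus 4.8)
The plan is to prove the estimate for $I_{\alpha}$ first and then deduce the bound for $M_{\alpha}$ from the pointwise domination $M_{\alpha}f(x)\le C\,I_{\alpha}(|f|)(x)$, which holds because $|B(x,r)|^{1-\alpha/n}\approx r^{n-\alpha}$ and $\int_{B(x,r)}|f|\le r^{n-\alpha}\int_{B(x,r)}|f(y)||x-y|^{-(n-\alpha)}\,dy$. Thus it suffices to bound the Morrey norm of $I_{\alpha}f$. Fixing an arbitrary ball $B=B(x_0,r)$, I would split $f=f_1+f_2$ with $f_1=f\chi_{2B}$ and $f_2=f\chi_{(2B)^{\complement}}$, and estimate the two contributions to $\big(w^q(B)^{-\kappa q/p}\int_B|I_{\alpha}f|^qw^q\big)^{1/q}$ separately.

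For the local piece I would invoke Theorem A: since $I_{\alpha}$ maps $L^p(w^p)$ to $L^q(w^q)$, one has $\int_B|I_{\alpha}f_1|^qw^q\le\|I_{\alpha}f_1\|_{L^q(w^q)}^q\le C\big(\int_{2B}|f|^pw^p\big)^{q/p}$. Bounding $\int_{2B}|f|^pw^p\le w^q(2B)^{\kappa}\|f\|_{L^{p,\kappa}(w^p,w^q)}^p$ and using the doubling property $w^q(2B)\le C\,w^q(B)$ (valid since $w\in A(p,q)$ forces $w^q$ to be an $A_{\infty}$ weight) yields the required bound after dividing by $w^q(B)^{\kappa q/p}$.

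The global piece is where the Morrey structure enters. For $x\in B$ and $y\in(2B)^{\complement}$ one has $|x-y|\approx|x_0-y|$, so decomposing $(2B)^{\complement}$ into dyadic annuli $2^{k+1}B\setminus 2^kB$ gives $|I_{\alpha}f_2(x)|\le C\sum_{k\ge1}(2^kr)^{-(n-\alpha)}\int_{2^{k+1}B}|f|$. I would then apply H\"older's inequality as $\int_{2^{k+1}B}|f|\le\big(\int_{2^{k+1}B}|f|^pw^p\big)^{1/p}\big(\int_{2^{k+1}B}w^{-p'}\big)^{1/p'}$, bounding the first factor by $w^q(2^{k+1}B)^{\kappa/p}\|f\|_{L^{p,\kappa}}$ and the second, via the $A(p,q)$ condition, by $C\,|2^{k+1}B|^{1/q+1/p'}w^q(2^{k+1}B)^{-1/q}$. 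The powers of $2^kr$ then cancel exactly because $n/q+n/p'=n-\alpha$, which is precisely the relation $1/q=1/p-\alpha/n$ in disguise, leaving the clean pointwise bound $|I_{\alpha}f_2(x)|\le C\|f\|_{L^{p,\kappa}}\sum_{k\ge1}w^q(2^{k+1}B)^{\kappa/p-1/q}$.

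The main obstacle is the convergence of this last series and extracting from it a bound of the exact form $w^q(B)^{\kappa/p-1/q}$. Here the hypothesis $\kappa<p/q$ is essential: it makes the exponent $\kappa/p-1/q$ strictly negative. Combined with the reverse-doubling lower bound $w^q(2^{k+1}B)\ge c\,2^{(k+1)n\eta}w^q(B)$ for some $\eta>0$, which every $A_{\infty}$ weight satisfies, the negative power converts the series into a convergent geometric sum dominated by $C\,w^q(B)^{\kappa/p-1/q}$. Feeding this constant pointwise bound back in gives $\int_B|I_{\alpha}f_2|^qw^q\le C\|f\|_{L^{p,\kappa}}^q\,w^q(B)^{\kappa q/p}$, and dividing by $w^q(B)^{\kappa q/p}$ completes the global estimate. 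Summing the two contributions and taking the supremum over all balls $B$ yields the claimed boundedness.
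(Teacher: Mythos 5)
Your argument is correct, but note that this paper contains no proof of the statement: Theorem E is quoted from Komori and Shirai \cite{komori}, and what you have written is essentially a faithful reconstruction of the standard argument from that reference. All the ingredients check out: the pointwise domination $M_{\alpha}f\le C\,I_{\alpha}(|f|)$; Theorem A together with the doubling of $w^q$ (which lies in $A_{\infty}$ because $w\in A(p,q)$) for the local part $f_1$; and, for the tail, the annular decomposition, H\"older's inequality, the $A(p,q)$ condition in the form $\big(\int_{2^{k+1}B}w^{-p'}\big)^{1/{p'}}\le C\,|2^{k+1}B|^{1/{p'}+1/q}\,w^q(2^{k+1}B)^{-1/q}$, and the exact cancellation $n/{p'}+n/q=n-\alpha$. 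Your closing step---converting $\sum_{k}w^q(2^{k+1}B)^{\kappa/p-1/q}$ into $C\,w^q(B)^{\kappa/p-1/q}$ via reverse doubling (a consequence of the reverse H\"older inequality for the $A_\infty$ weight $w^q$) together with the strict inequality $\kappa<p/q$---is exactly where subcriticality is spent. It is worth observing that this is precisely the step that breaks down in the critical case $\kappa=p/q$ treated in Theorem \ref{thm2} of the present paper: there the exponent $\kappa/p-1/q$ vanishes, your geometric series degenerates into a sum of constants, and the authors must instead estimate the oscillation $T_{\Omega,\alpha}f_2(x)-T_{\Omega,\alpha}f_2(y)$ and extract the decay $2^{-j}$ plus Dini tails from the smoothness of the kernel (Lemma \ref{lem1}), while otherwise following the same skeleton you use: splitting at a dilate of $B$, dyadic annuli, H\"older's inequality, and the $A(p,q)$-type condition combined with the exponent identity $1/{p'}+1/q=1-\alpha/n$.
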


In 2013, we established the boundedness properties of $M_{\Omega,\alpha}$ and $T_{\Omega,\alpha}$ in weighted Morrey spaces based on Theorem C (see \cite[Theorems 1.1 and 1.2]{wang1}).

\newtheorem*{thmf}{Theorem F}
\begin{thmf}[\cite{wang1}]
Suppose that $\Omega\in L^s(\mathbf{S}^{n-1})$ with $1<s\le\infty$. If $0<\alpha<n$, $1\le s'<p<n/{\alpha}$, $1/q=1/p-{\alpha}/n$, $0<\kappa<p/q$ and $w^{s'}\in A(p/{s'},q/{s'})$, then the fractional maximal operator $M_{\Omega,\alpha}$ and the fractional integral operator $T_{\Omega,\alpha}$ are bounded from $L^{p,\kappa}(w^p,w^q)$ to $L^{q,{\kappa q}/p}(w^q)$.
\end{thmf}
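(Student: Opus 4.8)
The plan is to derive the weighted Morrey estimate from the weighted Lebesgue estimate of Theorem~C by the standard Komori--Shirai localization. Since $M_{\Omega,\alpha}$ is sublinear and $T_{\Omega,\alpha}$ is linear, it suffices to describe the argument for $T_{\Omega,\alpha}$; the maximal operator is handled by the same splitting together with $M_{\Omega,\alpha}(f)\le M_{\Omega,\alpha}(f_1)+M_{\Omega,\alpha}(f_2)$. Fix a ball $B=B(x_0,r_B)$ and write $f=f_1+f_2$ with $f_1=f\chi_{2B}$ and $f_2=f\chi_{(2B)^{\complement}}$. The goal is to bound $w^q(B)^{-\kappa/p}\big(\int_B|T_{\Omega,\alpha}f|^qw^q\big)^{1/q}$ by $C\|f\|_{L^{p,\kappa}(w^p,w^q)}$ uniformly in $B$, and I would estimate the contributions of $f_1$ and $f_2$ separately. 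A recurring ingredient is that $w^{s'}\in A(p/s',q/s')$ forces $w^q=(w^{s'})^{q/s'}$ to be a Muckenhoupt $A_\infty$ weight, hence both doubling and reverse doubling.

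For the local part, Theorem~C gives
\[
\Big(\int_B|T_{\Omega,\alpha}f_1|^qw^q\Big)^{1/q}\le C\|f_1\|_{L^p(w^p)}=C\Big(\int_{2B}|f|^pw^p\Big)^{1/p}\le C\|f\|_{L^{p,\kappa}(w^p,w^q)}\,w^q(2B)^{\kappa/p},
\]
and the doubling inequality $w^q(2B)\le Cw^q(B)$ turns this into the desired bound $C\|f\|_{L^{p,\kappa}}\,w^q(B)^{\kappa/p}$.

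For the global part I would prove a pointwise estimate on $B$. For $x\in B$ and $y\in(2B)^{\complement}$ one has $|x-y|\approx|x_0-y|$, so after decomposing $(2B)^{\complement}=\bigcup_{k\ge1}(2^{k+1}B\setminus 2^kB)$,
\[
|T_{\Omega,\alpha}f_2(x)|\le C\sum_{k=1}^{\infty}(2^kr_B)^{-(n-\alpha)}\int_{2^{k+1}B}|\Omega(x-y)||f(y)|\,dy.
\]
To each integral I would apply H\"older's inequality with the three exponents $s,p,t$ determined by $1/t=1/s'-1/p>0$ (positive since $s'<p$), splitting the integrand as $|\Omega(x-y)|\cdot|f(y)|w(y)\cdot w(y)^{-1}$. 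By the homogeneity of $\Omega$ the first factor is at most $C\|\Omega\|_{L^s(\mathbf{S}^{n-1})}(2^{k+1}r_B)^{n/s}$; the second is at most $\|f\|_{L^{p,\kappa}}\,w^q(2^{k+1}B)^{\kappa/p}$; and the decisive point is that $t=s'(p/s')'$, so $w^{-t}=(w^{s'})^{-(p/s')'}$ is exactly the quantity governed by the $A(p/s',q/s')$ condition, which bounds the third factor by $C|2^{k+1}B|^{1/t+1/q}w^q(2^{k+1}B)^{-1/q}$. Because $1/s+1/t+1/q=1-1/p+1/q$ and $1/q=1/p-\alpha/n$, all powers of $2^kr_B$ cancel, leaving
\[
|T_{\Omega,\alpha}f_2(x)|\le C\|\Omega\|_{L^s}\|f\|_{L^{p,\kappa}}\sum_{k=1}^{\infty}w^q(2^{k+1}B)^{\kappa/p-1/q}.
\]
Here $1/q-\kappa/p>0$ by the hypothesis $\kappa<p/q$, while reverse doubling gives $w^q(2^{k+1}B)\ge C\,2^{(k+1)n\delta}w^q(B)$ for some $\delta>0$; hence the series converges geometrically to $Cw^q(B)^{\kappa/p-1/q}$. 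Integrating this pointwise bound over $B$ against $w^q$ reproduces $C\|f\|_{L^{p,\kappa}}\,w^q(B)^{\kappa/p}$, matching the local estimate.

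The hard part is the global term. Two things must line up exactly: the three-fold H\"older splitting has to leave behind precisely the $(p/s')'$-power of $w^{-1}$ that the $A(p/s',q/s')$ hypothesis controls --- this is what forces a condition on $w^{s'}$ rather than on $w$ --- and the annular sum has to decay geometrically, which is where $\kappa<p/q$ (making the exponent $\kappa/p-1/q$ negative) and the reverse doubling of the $A_\infty$ weight $w^q$ enter. The bookkeeping showing that all powers of the radius $2^kr_B$ cancel is the computational core and is the step I would carry out most carefully.
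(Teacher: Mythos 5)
Your proof is correct, but note that this paper never actually proves Theorem F --- it is quoted from \cite{wang1} --- so the only in-text point of comparison is the proof of Theorem \ref{thm2}, the critical case $\kappa=p/q$. Against that benchmark, your computations all check out: the choice $t=s'(p/s')'$ makes $w^{-t}=(w^{s'})^{-(p/s')'}$ precisely the quantity controlled by the $A(p/s',q/s')$ hypothesis, which is the same identity as \eqref{cal3} and the same weight manipulation as \eqref{wang3} in the paper, and your radius bookkeeping $\alpha/n-1+1/s+1/t+1/q=0$ is the cancellation the paper records via \eqref{cal}. The instructive difference is how the far-field series is summed, and it explains the architecture of the present paper. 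In your sub-critical regime the exponent $\kappa/p-1/q$ is strictly negative, so no smoothness of $\Omega$ is needed: reverse doubling of the $A_\infty$ weight $w^q$ yields geometric decay and even a pointwise bound for $T_{\Omega,\alpha}f_2$ on $B$. At the critical index $\kappa=p/q$ that exponent vanishes, your series degenerates to $\sum_k 1$ and diverges, and a pointwise bound on $T_{\Omega,\alpha}f_2$ is genuinely unavailable; this is exactly why the paper abandons pointwise estimates, instead bounds the oscillation $T_{\Omega,\alpha}f_2(x)-T_{\Omega,\alpha}f_2(y)$ over $B$, and buys summability from the $L^s$-Dini condition through the kernel-difference estimate of Lemma \ref{lem1}, each annulus contributing a factor $2^{-j}+\int\omega_s(\delta)\,\frac{d\delta}{\delta}$. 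Two details you should tighten: for $M_{\Omega,\alpha}f_2$ with $x\in B$, only radii $r\gtrsim r_B$ contribute (smaller balls miss the support of $f_2$), after which doubling of $w^q$ is needed to replace $w^q(B(x,r))^{\kappa/p-1/q}$ by $C\,w^q(B)^{\kappa/p-1/q}$; and the reverse doubling you invoke deserves a line of justification, namely $w^q=(w^{s'})^{q/s'}\in A_\infty$ together with the $A_\infty$ comparison $w^q(B)\le C\big(|B|/|2^{k+1}B|\big)^{\delta}\,w^q(2^{k+1}B)$, which is what produces the geometric factor $2^{-(k+1)n\delta}$.
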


In the present work, we study the homogeneous fractional integral operators and fractional maximal operators on the weighted Morrey spaces $L^{p,\kappa}(w^p,w^q)$. Motivated by Theorems D and F, it is natural to ask whether there is a corresponding estimate for the critical case $\kappa=p/q=1-{(\alpha p)}/n$. We give a positive answer to this question. It can be shown that the homogeneous fractional integral operator $T_{\Omega,\alpha}$ is bounded from $L^{p,\kappa}(w^p,w^q)$ to $\mathrm{BMO}(\mathbb R^n)$, if $\Omega$ satisfies the Dini smoothness condition $\mathfrak{D}_{s}$ with $1<s\le\infty$. Moreover, the fractional maximal operator $M_{\Omega,\alpha}$ is shown to be bounded from $L^{p,\kappa}(w^p,w^q)$ to $L^{\infty}(\mathbb R^n)$, if $\Omega\in L^s(\mathbf{S}^{n-1})$ with $1<s\le\infty$.

Throughout this paper, $C$ is a positive constant which is independent of the main parameters and not necessarily the same at each occurrence.

\section{Main results}
Now let us formulate our main results as follows.
\begin{thm}\label{thm1}
Let $0<\alpha<n$ and $p=n/{\alpha}$. If $\Omega\in L^s(\mathbf{S}^{n-1})$ with $s>n/{(n-\alpha)}$ and $w^{s'}\in A(p/{s'},\infty)$, then the
fractional maximal operator $M_{\Omega,\alpha}$ is bounded from $L^p(w^p)$ to $L^{\infty}(w)$.
\end{thm}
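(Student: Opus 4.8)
The plan is to mimic the Hölder-inequality argument the paper has already used for $M_{\alpha}$, inserting one extra factor to absorb the kernel $\Omega$ and calibrating the exponents so that the critical relation $p=n/\alpha$ forces all powers of the radius to cancel. Fix $x\in\mathbb R^n$ and $r>0$, and write the integrand as a product of three pieces,
\[
|\Omega(x-y)|\,|f(y)| = |\Omega(x-y)|\cdot\big(|f(y)|w(y)\big)\cdot w(y)^{-1}.
\]
Since $s>n/{(n-\alpha)}$ is equivalent (using $p=n/\alpha$, hence $p'=n/{(n-\alpha)}$) to $s'<p$, the exponent $u$ defined by $1/u=1-1/s-1/p$ is positive and satisfies $1/s+1/p+1/u=1$. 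Applying Hölder's inequality with exponents $(s,p,u)$ over $B(x,r)$ produces three factors: $\big(\int_{B(x,r)}|\Omega(x-y)|^s\,dy\big)^{1/s}$, the local quantity $\big(\int_{B(x,r)}|f(y)|^pw(y)^p\,dy\big)^{1/p}\le\|f\|_{L^{p}(w^p)}$, and $\big(\int_{B(x,r)}w(y)^{-u}\,dy\big)^{1/u}$.

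First I would evaluate the kernel factor in polar coordinates. Because $\Omega$ is homogeneous of degree zero, $\Omega(x-y)$ depends only on $(x-y)/{|x-y|}$, so
\[
\bigg(\int_{B(x,r)}|\Omega(x-y)|^s\,dy\bigg)^{1/s}=C\,r^{n/s}\,\|\Omega\|_{L^s(\mathbf{S}^{n-1})}.
\]
Next I would combine the remaining weight factor with the essential supremum appearing in the definition of $L^{\infty}(w)$. A direct computation gives $u=s'p/{(p-s')}$, so that $-u=-s'\cdot(p/{s'})'$ and $s'/u=(p-s')/p=1/{(p/{s'})'}$; therefore the hypothesis $w^{s'}\in A(p/{s'},\infty)$, raised to the power $1/{s'}$, yields exactly
\[
\bigg(\underset{y\in B(x,r)}{\mathrm{ess\,sup}}\;w(y)\bigg)\bigg(\frac{1}{|B(x,r)|}\int_{B(x,r)}w(y)^{-u}\,dy\bigg)^{1/u}\le C.
\]

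Assembling the three estimates and writing $\big(\int_{B(x,r)}w^{-u}\big)^{1/u}=|B(x,r)|^{1/u}\big(\tfrac{1}{|B(x,r)|}\int_{B(x,r)}w^{-u}\big)^{1/u}$, the product $\big(\underset{B(x,r)}{\mathrm{ess\,sup}}\,w\big)\cdot|B(x,r)|^{-(1-\alpha/n)}\int_{B(x,r)}|\Omega(x-y)||f(y)|\,dy$ is bounded by $C\|\Omega\|_{L^s(\mathbf{S}^{n-1})}\|f\|_{L^{p}(w^p)}$ times $r^{-(n-\alpha)+n/s+n/u}$. The key point — and the only place the critical exponent enters — is that $n/s+n/u=n-n/p=n-\alpha$, so the total power of $r$ vanishes and the bound is uniform in both $r$ and $x$. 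Taking the supremum over $r>0$, and using that $w(x)\le\underset{y\in B(x,r)}{\mathrm{ess\,sup}}\,w(y)$ for almost every $x$ (its Lebesgue points), gives $w(x)\,M_{\Omega,\alpha}f(x)\le C\|f\|_{L^{p}(w^p)}$ almost everywhere, which is the claimed $L^{\infty}(w)$ estimate. I expect the main difficulty to be purely bookkeeping: matching the Hölder exponent $u$ attached to $w^{-1}$ against the dual exponent $(p/{s'})'$ concealed in the $A(p/{s'},\infty)$ condition, and confirming that the single inequality $s>n/{(n-\alpha)}$ is precisely what keeps $u$ positive and makes the radius-exponent identity exact.
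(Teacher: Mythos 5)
Your argument is correct and is essentially the paper's own proof: your single three-exponent H\"{o}lder application with exponents $(s,p,u)$, where $u=s'\big(\frac{p}{s'}\big)'$, is exactly the paper's two successive H\"{o}lder inequalities, and all the remaining ingredients coincide --- polar coordinates for the kernel factor $C\,r^{n/s}\|\Omega\|_{L^s(\mathbf{S}^{n-1})}$, the hypothesis $w^{s'}\in A(p/{s'},\infty)$ raised to the power $1/{s'}$, and the exponent cancellation $n/s+n/u=n-\alpha$ forced by $p=n/{\alpha}$, which the paper records as $\frac{1}{s'\big(\frac{p}{s'}\big)'}=\frac{1}{p'}-\frac{1}{s}$. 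The only (cosmetic) difference is your closing Lebesgue-point remark spelling out the passage from the ball-wise inequality to the $L^{\infty}(w)$ bound, a step the paper compresses into ``by the definition of $L^{\infty}(w)$.''
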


\begin{thm}\label{thm2}
Suppose that $\Omega$ satisfies the $L^s$-Dini condition $\mathfrak{D}_{s}$ with $1<s\le\infty$. If $0<\alpha<n$, $s'\leq p<n/{\alpha}$, $1/q=1/p-{\alpha}/n$, $\kappa=p/q=1-{(\alpha p)}/n$ and $w^{s'}\in A(p/{s'},q/{s'})$, then the homogeneous fractional integral operator $T_{\Omega,\alpha}$ is bounded from $L^{p,\kappa}(w^p,w^q)$ to $\mathrm{BMO}(\mathbb R^n)$.
\end{thm}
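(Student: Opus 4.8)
The plan is to establish the $\mathrm{BMO}$ bound ball by ball. Fix an arbitrary ball $B=B(x_0,r)$; since for any locally integrable $\hbar$ and any constant $c$ one has $\frac1{|B|}\int_B|\hbar-\hbar_B|\le\frac2{|B|}\int_B|\hbar-c|$, it suffices to produce a constant $c_B$ with
\[
\frac1{|B|}\int_B\big|T_{\Omega,\alpha}f(x)-c_B\big|\,dx\le C\|f\|_{L^{p,\kappa}(w^p,w^q)}.
\]
I would split $f=f_1+f_2$ with $f_1=f\chi_{2B}$ and $f_2=f\chi_{(2B)^{\complement}}$, and take $c_B=T_{\Omega,\alpha}f_2(x_0)$, where (as in the endpoint theory, since the pointwise integral may diverge) the difference $T_{\Omega,\alpha}f_2(x)-T_{\Omega,\alpha}f_2(x_0)$ is read as the absolutely convergent integral $\int_{(2B)^{\complement}}[K(x,y)-K(x_0,y)]f(y)\,dy$ of the kernel difference $K(x,y)=\Omega(x-y)/|x-y|^{n-\alpha}$, its convergence being part of the global estimate below. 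The triangle inequality reduces matters to a local term $\frac1{|B|}\int_B|T_{\Omega,\alpha}f_1|\,dx$ and a global term $\frac1{|B|}\int_B|T_{\Omega,\alpha}f_2(x)-T_{\Omega,\alpha}f_2(x_0)|\,dx$.

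For the local term I would apply H\"older's inequality with exponents $q$ and $q'$ to insert the weight, then invoke the $(L^p(w^p),L^q(w^q))$ boundedness of $T_{\Omega,\alpha}$ from Theorem C together with the Morrey normalization, obtaining
\[
\frac1{|B|}\int_B|T_{\Omega,\alpha}f_1|\,dx\le\frac{C}{|B|}\|f\|_{L^{p,\kappa}(w^p,w^q)}\,w^q(2B)^{\kappa/p}\Big(\int_B w^{-q'}\,dx\Big)^{1/q'}.
\]
The key point is that $w^{s'}\in A(p/s',q/s')$ forces $w^q\in A_q$, so the $A_q$ inequality bounds $\big(\int_Bw^{-q'}\big)^{1/q'}$ by $C|B|\,w^q(B)^{-1/q}$. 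Since $\kappa/p=1/q$ and $w^q$, being an $A_\infty$ weight, is doubling ($w^q(2B)\le Cw^q(B)$), the local term collapses to $C\,(w^q(2B)/w^q(B))^{1/q}\,\|f\|_{L^{p,\kappa}}\le C\|f\|_{L^{p,\kappa}}$.

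For the global term I would use kernel regularity on the dyadic annuli $A_k=\{y:2^kr\le|y-x_0|<2^{k+1}r\}$, $k\ge1$. For $x\in B$ and $y\in A_k$ I split $|K(x,y)-K(x_0,y)|$ into a radial part, handled by the mean value theorem and bounded by $Cr|\Omega(x_0-y)|/|x_0-y|^{n-\alpha+1}$, and a rotational part $|\Omega(x-y)-\Omega(x_0-y)|/|x-y|^{n-\alpha}$, controlled via the $L^s$-Dini modulus $\omega_s$ using that the directions of $x-y$ and $x_0-y$ differ by an angle $\lesssim2^{-k}$ on $A_k$. On each $A_k$ I would apply H\"older with the three exponents $s$, $p$ and $t=ps'/(p-s')$ (admissible since $s'\le p$ gives $1/s+1/p\le1$), estimating the $\Omega$-factor by $C(2^kr)^{n/s}\omega_s(2^{-k})$, the factor $fw$ by $\|f\|_{L^{p,\kappa}}w^q(2^{k+1}B)^{1/q}$, and the factor $w^{-t}=(w^{s'})^{-(p/s')'}$ through the $A(p/s',q/s')$ condition on $2^{k+1}B$. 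The weighted measures $w^q(2^{k+1}B)^{\pm1/q}$ cancel exactly, and a direct bookkeeping of the radius powers — using $\kappa=p/q$, $1/q=1/p-\alpha/n$ and $1/s+1/s'=1$ — shows the net power of $2^kr$ is zero. Thus the rotational part contributes $C\omega_s(2^{-k})\|f\|_{L^{p,\kappa}}$ on $A_k$ and the radial part $C2^{-k}\|f\|_{L^{p,\kappa}}$; both series converge, the first precisely because $\int_0^1\omega_s(\delta)\,d\delta/\delta<\infty$ is equivalent to $\sum_k\omega_s(2^{-k})<\infty$.

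The main obstacle is this rotational part: one must produce the sharp smoothness estimate $\big(\int_{A_k}|\Omega(x-y)-\Omega(x_0-y)|^s\,dy\big)^{1/s}\le C(2^kr)^{n/s}\omega_s(2^{-k})$ from the Dini modulus, and then verify that the exponent bookkeeping across the threefold H\"older split and the $A(p/s',q/s')$ balance leaves exactly zero net power of the radius. It is precisely this exact cancellation, forced by the endpoint relation $\kappa=p/q=1-\alpha p/n$, that places the image in $\mathrm{BMO}(\mathbb R^n)$ rather than in a genuine Morrey or Lebesgue space; for $s=\infty$ the three-exponent H\"older degenerates to the pair $(p,p')$ and the argument simplifies. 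Finally, the absolute convergence established in the global estimate shows that $c_B$, and hence $T_{\Omega,\alpha}f$ as an element of $\mathrm{BMO}(\mathbb R^n)$ (i.e.\ modulo constants), is well defined.
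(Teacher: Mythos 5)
Your proposal is correct, and for the main (global) term it follows essentially the paper's own route: the paper likewise fixes $B=B(x_0,r)$, splits $f$ at a dilate of $B$ ($4B$ rather than $2B$), passes through the kernel difference at the center $x_0$ on the dyadic annuli $2^{j+1}B\setminus 2^jB$, applies H\"older with $(s,s')$ and then with $\big(p/{s'},(p/{s'})'\big)$, and cancels the factors $w^q(2^{j+1}B)^{\pm 1/q}$ using $\kappa/p=1/q$ — exactly your zero-net-power bookkeeping. The ``sharp smoothness estimate'' you isolate as the main obstacle need not be reproved: it is Lemma \ref{lem1}, quoted by the paper from \cite{ding2} (and going back to Kurtz--Wheeden), and your form with $\omega_s(2^{-k})$ follows from it by monotonicity of $\omega_s$, since $\int_{|x-x_0|/{2^{k+1}r}}^{|x-x_0|/{2^kr}}\omega_s(\delta)\,\frac{d\delta}{\delta}\leq C\,\omega_s(2^{-k})$. (The paper compares $T_{\Omega,\alpha}f_2(x)$ with its average over $B$ rather than with $T_{\Omega,\alpha}f_2(x_0)$; the two devices are interchangeable.) Where you genuinely diverge is the local term: the paper never invokes Theorem C, but bounds $\frac{1}{|B|}\int_B|T_{\Omega,\alpha}f_1|$ directly via Fubini and the kernel-size estimate $\int_{|x-y|<5r}|\Omega(x-y)||x-y|^{\alpha-n}\,dx\leq C|B|^{\alpha/n}\|\Omega\|_{L^s(\mathbf{S}^{n-1})}$, followed by H\"older and the $A(p/{s'},q/{s'})$ condition. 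Your alternative — H\"older with $(q,q')$, Theorem C, the implication $w^{s'}\in A(p/{s'},q/{s'})\Rightarrow w^q\in A_{1+q(1/{p'}-1/s)}\subset A_q$ (which is indeed valid), and $A_\infty$-doubling — is sound, but it imports Theorem C, which is stated only for $s'<p$, while the theorem to be proved allows $p=s'$; at that endpoint your citation does not apply, and your exponent $t=ps'/{(p-s')}$ degenerates to $\infty$ both locally and on the annuli. The repair is easy and is what the paper does in its Case 1: replace the $t$-factor by $\big(\mathrm{ess\,sup}_{z\in 2^{j+1}B}\,w(z)^{-p}\big)^{1/p}$ and use $w^{p}\in A(1,q/p)$ (and, for the local term, use the paper's direct size argument, which covers all $s'\leq p$). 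With that endpoint treated — alongside the $s=\infty$ degeneration you already note — your proof is complete, and its only real cost relative to the paper's is the extra dependence on Theorem C that the direct local estimate avoids.
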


In particular, when $\Omega\equiv1$ and $s=\infty$, we have the following result.

\begin{cor}
If $0<\alpha<n$, $1\leq p<n/{\alpha}$, $1/q=1/p-{\alpha}/n$, $\kappa=p/q=1-{(\alpha p)}/n$ and $w\in A(p,q)$, then the Riesz potential operator $I_{\alpha}$ is bounded from $L^{p,\kappa}(w^p,w^q)$ to $\mathrm{BMO}(\mathbb R^n)$.
\end{cor}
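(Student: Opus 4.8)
The plan is to deduce the Corollary from Theorem~\ref{thm2} by specialization, and then to record the mechanism behind it, which for $I_\alpha$ is particularly transparent. Taking $\Omega\equiv1$, the kernel is homogeneous of degree zero and satisfies the $L^\infty$-Dini condition $\mathfrak D_\infty$ trivially, since $\omega_\infty(\delta)\equiv0$; choosing $s=\infty$ gives $s'=1$, so the hypotheses $s'\le p<n/\alpha$ and $w^{s'}\in A(p/s',q/s')$ collapse to $1\le p<n/\alpha$ and $w\in A(p,q)$, while $T_{\Omega,\alpha}$ becomes $\gamma(\alpha)I_\alpha$. Hence the Corollary is immediate once Theorem~\ref{thm2} is proved. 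I would nevertheless carry out the estimate directly, the point being that the smoothness of the convolution kernel $|x-y|^{\alpha-n}$ replaces the $L^s$-Dini apparatus by an elementary gradient bound.

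A preliminary remark governs the whole argument: at the critical exponent $\kappa=p/q$ the pointwise integral defining $I_\alpha f$ need not converge for $f\in L^{p,\kappa}(w^p,w^q)$, so $I_\alpha f$ is to be understood modulo constants, exactly as in the target $\mathrm{BMO}(\mathbb R^n)$. Accordingly I would not subtract the (divergent) value of $I_\alpha f$ at the center but work with kernel differences throughout. Fix a ball $B=B(x_0,r)$ and write $f=f_1+f_2$ with $f_1=f\chi_{2B}$ and $f_2=f\chi_{(2B)^\complement}$. Since
\begin{equation*}
\frac{1}{|B|}\int_B\big|I_\alpha f(x)-(I_\alpha f)_B\big|\,dx\le\frac{1}{|B|^2}\int_B\!\int_B\big|I_\alpha f(x)-I_\alpha f(x')\big|\,dx'\,dx,
\end{equation*}
it suffices to bound, uniformly in $B$, the contribution $\mathrm{I}$ of $f_1$ (for which $|I_\alpha f_1(x)-I_\alpha f_1(x')|\le|I_\alpha f_1(x)|+|I_\alpha f_1(x')|$, so $\mathrm I\le\tfrac{2}{|B|}\int_B|I_\alpha f_1|\,dx$) and the contribution $\mathrm{II}$ of $f_2$, where $I_\alpha f_2(x)-I_\alpha f_2(x')=\tfrac{1}{\gamma(\alpha)}\int_{(2B)^\complement}\big(|x-y|^{\alpha-n}-|x'-y|^{\alpha-n}\big)f(y)\,dy$ is an absolutely convergent integral even though each term separately may diverge.

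For the local term $\mathrm I$ I would apply Hölder's inequality against the weight with exponents $q,q'$, then the strong type $(L^p(w^p),L^q(w^q))$ boundedness of $I_\alpha$ from Theorem~A, to obtain
\begin{equation*}
\mathrm I\le\frac{C}{|B|}\bigg(\int_{2B}|f|^pw^p\bigg)^{1/p}\bigg(\int_B w^{-q'}\bigg)^{1/{q'}}\le\frac{C}{|B|}\,\|f\|_{L^{p,\kappa}(w^p,w^q)}\big(w^q(2B)\big)^{1/q}\bigg(\int_B w^{-q'}\bigg)^{1/{q'}},
\end{equation*}
using $\kappa/p=1/q$. Jensen's inequality converts $w^{-q'}$ into $w^{-p'}$ (as $q'<p'$), and the $A(p,q)$ condition on $2B$ then makes the weighted averages cancel; since $1/q+1/q'=1$, the powers of $|B|$ cancel as well and $\mathrm I\le C\|f\|_{L^{p,\kappa}(w^p,w^q)}$. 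The endpoint $p=1$ is the only place where Theorem~A supplies merely the weak type $(1,q)$ bound; there I would replace the strong type step by Kolmogorov's inequality. (This is the exact analogue, for $I_\alpha$, of the boundary case $s'=p$ in Theorem~\ref{thm2}, which lies just outside the range of Theorem~C.)

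For the global term $\mathrm{II}$, the mean value theorem gives $\big||x-y|^{\alpha-n}-|x'-y|^{\alpha-n}\big|\le C\,r\,|x_0-y|^{\alpha-n-1}$ for $x,x'\in B$ and $y\in(2B)^\complement$, because $|x-x'|\le2r\le|x_0-y|$. Splitting $(2B)^\complement$ into dyadic annuli $2^{j+1}B\setminus2^jB$ $(j\ge1)$ and using Hölder with the weight on each, then bounding $\big(\int_{2^{j+1}B}|f|^pw^p\big)^{1/p}\le\|f\|_{L^{p,\kappa}(w^p,w^q)}\big(w^q(2^{j+1}B)\big)^{1/q}$ and invoking $A(p,q)$ to estimate $\big(w^q(2^{j+1}B)\big)^{1/q}\big(\int_{2^{j+1}B}w^{-p'}\big)^{1/{p'}}\le C(2^jr)^{n(1/q+1/{p'})}=C(2^jr)^{n-\alpha}$, I find that the $j$-th summand is dominated by $C\|f\|_{L^{p,\kappa}(w^p,w^q)}\,2^{-j}$, so the series converges. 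This tail summation is the crux of the matter and the step I expect to be the main obstacle: it succeeds precisely because the critical relations $\kappa=p/q$ and $1/q+1/{p'}=1-\alpha/n$ are consumed in exact form, so that the polynomial growth $(2^jr)^{n-\alpha}$ of the weighted measures over the dilated balls is cancelled by the kernel decay $r/(2^jr)^{n-\alpha+1}$, leaving the summable factor $2^{-j}$. Any $\kappa<p/q$ would leave a positive power of $2^j$ uncancelled (and would instead place the image in the Morrey space of Theorem~F), which is exactly why the endpoint $\kappa=p/q$ forces the target to be $\mathrm{BMO}(\mathbb R^n)$.
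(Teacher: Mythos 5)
Your proposal is correct. Its first step --- taking $\Omega\equiv1$ and $s=\infty$, so that $\omega_\infty\equiv0$ makes $\mathfrak{D}_\infty$ trivial, $s'=1$ turns $w^{s'}\in A(p/{s'},q/{s'})$ into $w\in A(p,q)$, and $T_{\Omega,\alpha}=\gamma(\alpha)I_\alpha$ --- is precisely the paper's proof: the Corollary is stated there as the immediate specialization of Theorem \ref{thm2}, with nothing further. Your supplementary direct argument reproduces the skeleton of the paper's proof of Theorem \ref{thm2} (split $f$ at a dilate of $B$, local term by H\"{o}lder and $A(p,q)$, global term over dyadic annuli, with the critical identities $\kappa/p=1/q$ and $1/{p'}+1/q=1-\alpha/n$ consumed exactly to yield the summable factor $2^{-j}$), but differs in two substantive places. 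For the global term you replace Lemma \ref{lem1} by the mean value theorem; this is the natural degeneration, since for $\Omega\equiv1$ the modulus $\omega_s$ vanishes and the Dini lemma's bound collapses to exactly your gradient estimate $C\,r\,|x_0-y|^{\alpha-n-1}$, so the rough-kernel machinery is avoided at no cost. For the local term the paper never invokes the mapping properties of $I_\alpha$: it estimates $\frac{1}{|B|}\int_B|I_\alpha f_1|$ directly from the kernel size as in \eqref{wang1} and then applies H\"{o}lder and the weight condition, which handles $p=1$ (the case $p=s'$ in \eqref{wang83}) with no extra input; your route through Theorem A is also valid but costs you the weak-type endpoint at $p=1$, forcing the Kolmogorov repair --- which does go through, using $w\in A(1,q)$ (equivalently $w^q\in A_1$) and doubling, but is heavier than the paper's elementary computation. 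One point in your favor: your insistence that at the critical index $I_\alpha f$ need not be pointwise defined, so that the global part must be treated as an absolutely convergent integral of kernel differences and the conclusion read modulo constants, is a genuine care the paper elides; its term $\mathrm{II}$ is in substance the same difference estimate $T_{\Omega,\alpha}f_2(x)-T_{\Omega,\alpha}f_2(y)$ that you write down explicitly.
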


\begin{rem}
As a special case $w\equiv1$, this type of estimate was shown by Adams and implicit in \cite{adams} (see also \cite{adams1,adams2}).
\end{rem}

\section{Proofs of Theorems \ref{thm1} and \ref{thm2}}
In this section, we will prove the conclusions of Theorems \ref{thm1} and \ref{thm2}.
\begin{proof}[Proof of Theorem $\ref{thm1}$]
Given a ball $B(x,r)$ with center $x\in\mathbb R^n$ and radius $r\in(0,\infty)$, by using H\"{o}lder's inequality twice (since $p>s'$), we get
\begin{equation*}
\begin{split}
&\bigg(\underset{y\in B(x,r)}{\mathrm{ess\,sup}}\;w(y)\bigg)\cdot
\frac{1}{|B(x,r)|^{1-\alpha/n}}\int_{|x-y|<r}|\Omega(x-y)||f(y)|\,dy\\
&\leq\bigg(\underset{y\in B(x,r)}{\mathrm{ess\,sup}}\;w(y)\bigg)\cdot
\frac{1}{|B(x,r)|^{1/{p'}}}\bigg(\int_{|x-y|<r}|\Omega(x-y)|^sdy\bigg)^{1/s}\bigg(\int_{|x-y|<r}|f(y)|^{s'}dy\bigg)^{1/{s'}}\\
&\leq\bigg(\underset{y\in B(x,r)}{\mathrm{ess\,sup}}\;w(y)\bigg)\cdot
\frac{1}{|B(x,r)|^{1/{p'}}}\bigg(\int_{|x-y|<r}|\Omega(x-y)|^sdy\bigg)^{1/s}\\
&\times\bigg(\int_{B(x,r)}|f(y)|^{p}w(y)^{p}dy\bigg)^{1/p}
\bigg(\int_{B(x,r)}w^{s'}(y)^{-\big(\frac{p}{s'}\big)'}dy\bigg)^{\frac{1}{s'\big(\frac{p}{s'}\big)'}}.
\end{split}
\end{equation*}
Note that $\Omega\in L^s(\mathbf{S}^{n-1})$. Using polar coordinates, we obtain
\begin{equation}\label{omega}
\begin{split}
&\bigg(\int_{|x-y|<r}|\Omega(x-y)|^sdy\bigg)^{1/s}\\
&=\bigg(\int_{0}^{r}\int_{\mathbf{S}^{n-1}}|\Omega(x')|^s\varrho^{n-1}d\sigma(x')d\varrho\bigg)^{1/s}\\
&=C\cdot r^{n/s}\|\Omega\|_{L^s(\mathbf{S}^{n-1})}=C|B(x,r)|^{1/s}\|\Omega\|_{L^s(\mathbf{S}^{n-1})}.
\end{split}
\end{equation}
On the other hand, a direct computation shows that
\begin{equation*}
\begin{split}
\frac{1}{s'\big(\frac{p}{s'}\big)'}=\frac{1}{s'}\cdot\frac{p-s'}{p}=\frac{1}{s'}-\frac{1}{p}=\frac{1}{p'}-\frac{1}{s}.
\end{split}
\end{equation*}
This, combined with \eqref{omega}, yields
\begin{equation*}
\begin{split}
&\bigg(\underset{y\in B(x,r)}{\mathrm{ess\,sup}}\;w(y)\bigg)\cdot
\frac{1}{|B(x,r)|^{1-\alpha/n}}\int_{|x-y|<r}|\Omega(x-y)||f(y)|\,dy\\
&\leq C\|\Omega\|_{L^s(\mathbf{S}^{n-1})}\bigg(\int_{B(x,r)}|f(y)|^pw(y)^{p}\,dy\bigg)^{1/p}\\
&\times\bigg(\underset{y\in B(x,r)}{\mathrm{ess\,sup}}\;w(y)\bigg)
\bigg(\frac{1}{|B(x,r)|}\int_{B(x,r)}w^{s'}(y)^{-\big(\frac{p}{s'}\big)'}dy\bigg)^{\frac{1}{s'\big(\frac{p}{s'}\big)'}}\\
&=C\|\Omega\|_{L^s(\mathbf{S}^{n-1})}\bigg(\int_{B(x,r)}|f(y)|^pw(y)^{p}\,dy\bigg)^{1/p}\\
&\times\bigg[\bigg(\underset{y\in B(x,r)}{\mathrm{ess\,sup}}\;w^{s'}(y)\bigg)
\bigg(\frac{1}{|B(x,r)|}\int_{B(x,r)}w^{s'}(y)^{-\big(\frac{p}{s'}\big)'}dy\bigg)^{\frac{1}{\big(\frac{p}{s'}\big)'}}\bigg]^{\frac{1}{s'}}\\
&\leq C\|f\|_{L^p(w^p)},
\end{split}
\end{equation*}
where in the last step we have used the fact that $w^{s'}\in A(p/{s'},\infty)$. This ends the proof of Theorem \ref{thm1} by the definition of $L^{\infty}(w)$.
\end{proof}

Before we proceed to prove our next theorem, we need the following estimate which can be found in \cite[Lemma 1]{ding2}.
\begin{lem}[\cite{ding2}]\label{lem1}
Suppose that $0<\alpha<n$ and $\Omega$ satisfies the $L^s$-Dini condition $\mathfrak{D}_{s}$ with $1<s\le\infty$. If there exists a constant $0<\tau<1/2$ such that if $|x|<\tau R$, then we have
\begin{equation*}
\bigg(\int_{R\leq|z|<2R}\bigg|\frac{\Omega(z-x)}{|z-x|^{n-\alpha}}-\frac{\Omega(z)}{|z|^{n-\alpha}}\bigg|^sdz\bigg)^{1/s}
\leq C\cdot R^{n/s-(n-\alpha)}\bigg(\frac{|x|}{R}+\int_{\frac{|x|}{2R}}^{\frac{|x|}{R}}\omega_s(\delta)\frac{d\delta}{\delta}\bigg),
\end{equation*}
where the constant $C>0$ is independent of $R$ and $x$.
\end{lem}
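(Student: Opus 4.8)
The plan is to estimate the kernel difference by splitting it into a radial part and an angular part, writing
\[
\frac{\Omega(z-x)}{|z-x|^{n-\alpha}}-\frac{\Omega(z)}{|z|^{n-\alpha}}=\Omega(z-x)\Big(\frac{1}{|z-x|^{n-\alpha}}-\frac{1}{|z|^{n-\alpha}}\Big)+\frac{\Omega(z-x)-\Omega(z)}{|z|^{n-\alpha}}=:A(z)+B(z).
\]
By the triangle (Minkowski) inequality it suffices to bound the $L^s(\{R\le|z|<2R\})$ norms of $A$ and $B$ separately, matching them to the two summands $\frac{|x|}{R}$ and $\int_{|x|/2R}^{|x|/R}\omega_s(\delta)\frac{d\delta}{\delta}$, respectively.

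For the radial part $A$, I would first note that $|x|<\tau R$ with $\tau<1/2$ forces $|z-x|\sim|z|\sim R$ throughout the annulus, so the mean value theorem applied to $t\mapsto t^{-(n-\alpha)}$ yields $\big||z-x|^{-(n-\alpha)}-|z|^{-(n-\alpha)}\big|\le C|x|R^{-(n-\alpha)-1}$. Pulling this scalar factor out, the remaining integral $\int_{R\le|z|<2R}|\Omega(z-x)|^s\,dz$ is handled by the change of variables $u=z-x$ (whose image lies in the fixed annulus $\{R/2\le|u|<3R\}$ since $|x|<R/2$), followed by polar coordinates and the homogeneity of $\Omega$, exactly as in \eqref{omega}; this produces $CR^{n/s}\|\Omega\|_{L^s(\mathbf S^{n-1})}$. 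Multiplying the two factors gives the desired bound $CR^{n/s-(n-\alpha)}\cdot\frac{|x|}{R}$ for $\|A\|_{L^s}$.

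For the angular part $B$, since $|z|\sim R$ I would extract $|z|^{-(n-\alpha)}\le CR^{-(n-\alpha)}$ and reduce to estimating $\big(\int_{R\le|z|<2R}|\Omega(z-x)-\Omega(z)|^s\,dz\big)^{1/s}$. Passing to polar coordinates $z=\varrho\theta$ and using that $\Omega$ is homogeneous of degree zero, the factor $\Omega(z-x)=\Omega(\varrho\theta-x)=\Omega(\theta-x/\varrho)$ depends only on the direction, so with $y=x/\varrho$ (of length $|y|=|x|/\varrho\in[|x|/2R,\,|x|/R]$) the inner integral becomes the spherical difference $\int_{\mathbf S^{n-1}}|\Omega(\theta-y)-\Omega(\theta)|^s\,d\sigma(\theta)$. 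Granting the key spherical estimate $\big(\int_{\mathbf S^{n-1}}|\Omega(\theta-y)-\Omega(\theta)|^s\,d\sigma(\theta)\big)^{1/s}\le C\,\omega_s(c|y|)$, the remaining radial integration is recast, via the substitution $\delta=|x|/\varrho$ (under which $d\varrho\sim R\,\tfrac{d\delta}{\delta}$ on this range), into precisely the Dini-type integral $\int_{|x|/2R}^{|x|/R}\omega_s(\delta)\frac{d\delta}{\delta}$, giving $\|B\|_{L^s}\le CR^{n/s-(n-\alpha)}\int_{|x|/2R}^{|x|/R}\omega_s(\delta)\frac{d\delta}{\delta}$.

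The main obstacle is exactly the spherical estimate just invoked. Its difficulty is that, although for $|y|\le 1/2$ the point $(\theta-y)'$ is obtained from $\theta$ by a rotation whose displacement angle satisfies $\angle\big(\theta,(\theta-y)'\big)\le C|y|$, this rotation \emph{depends on} $\theta$, whereas $\omega_s(\delta)$ measures $\|\Omega(\rho\,\cdot)-\Omega\|_{L^s(\mathbf S^{n-1})}$ for a single rotation $\rho$ with $|\rho|<\delta$. I would resolve this by comparing the translation map $\theta\mapsto(\theta-y)'$ with fixed rotations uniformly: bounding the displacement angle by $C|y|$ over all of $\mathbf S^{n-1}$, and dominating the $\theta$-dependent rotations by rotations of comparable size up to a change of variables on the sphere whose Jacobian is bounded above and below since $|y|\le 1/2$, thereby absorbing the pointwise rotation into the supremum defining $\omega_s(c|y|)$. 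The case $s=\infty$ is entirely analogous and in fact simpler, replacing the $L^s$ averages by the $L^\infty$ norm and $\omega_s$ by $\omega_\infty$ throughout.
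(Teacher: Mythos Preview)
Your decomposition into the radial piece $A$ and the angular piece $B$, the mean-value estimate for $A$, and the reduction of $B$ via polar coordinates to the spherical $L^s$-modulus of $\Omega$ is exactly the standard argument from \cite{kurtz} (Lemma 5) and \cite{ding2}; the paper itself does not prove the lemma but simply cites those sources, so your outline matches the intended route.

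One small slip to flag: after the substitution $\delta=|x|/\varrho$ in the $B$-term you do \emph{not} land directly on the linear integral $\int\omega_s(\delta)\,\frac{d\delta}{\delta}$ but rather on
\[
\bigg(\int_{|x|/(2R)}^{|x|/R}\omega_s(c\delta)^s\,\frac{d\delta}{\delta}\bigg)^{1/s},
\]
since the $L^s$-norm is taken over the full $z$-integral and the radial variable sits inside the $s$-th power. This is easily repaired using the monotonicity of $\omega_s$: on a dyadic interval the $L^s(d\delta/\delta)$-average is dominated by the endpoint value $\omega_s(c|x|/R)$, and in turn $\omega_s(a)\le(\log 2)^{-1}\int_a^{2a}\omega_s(\delta)\,\frac{d\delta}{\delta}$ recovers a linear Dini integral over a (possibly shifted) dyadic interval, which is harmless for the telescoping sum in the proof of Theorem~\ref{thm2}. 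Your handling of the key spherical estimate---comparing $\theta\mapsto(\theta-y)'$ to rotations of size $O(|y|)$ via a bounded-Jacobian change of variables on $\mathbf S^{n-1}$---is the right mechanism, and the details are precisely those carried out in \cite{kurtz}.
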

\begin{rem}
Following the same arguments as in the proof of Lemma 5 in \cite{kurtz}, we can also establish the above lemma on the homogeneous kernel $\Omega(z)$.
\end{rem}
\begin{proof}[Proof of Theorem $\ref{thm2}$]
For any ball $B=B(x_0,r)$ with $(x_0,r)\in\mathbb R^n\times(0,\infty)$ and $f\in L^{p,\kappa}(w^p,w^q)$, we are going to give the estimation of the following expression
\begin{equation}\label{mainw}
\frac{1}{|B|}\int_{B}\big|T_{\Omega,\alpha}f(x)-(T_{\Omega,\alpha}f)_{B}\big|\,dx.
\end{equation}
For this purpose, we decompose $f$ as $f=f_1+f_2$, where $f_1=f\cdot\chi_{4B}$, $f_2=f\cdot\chi_{(4B)^{\complement}}$, $4B=B(x_0,4r)$. By the linearity of the fractional integral operator $T_{\Omega,\alpha}$, the expression \eqref{mainw} will be divided into two parts. That is,
\begin{equation*}
\begin{split}
&\frac{1}{|B|}\int_{B}\big|T_{\Omega,\alpha}f(x)-(T_{\Omega,\alpha}f)_{B}\big|\,dx\\
&\leq \frac{1}{|B|}\int_{B}\big|T_{\Omega,\alpha}f_1(x)-(T_{\Omega,\alpha}f_1)_{B}\big|\,dx
+\frac{1}{|B|}\int_{B}\big|T_{\Omega,\alpha}f_2(x)-(T_{\Omega,\alpha}f_2)_{B}\big|\,dx\\
&:=\mathrm{I+II}.
\end{split}
\end{equation*}
Let us first consider the term $\mathrm{I}$. When $x\in B$ and $y\in 4B$, we have $|x-y|<5r$. An application of Fubini's theorem yields
\begin{equation*}
\begin{split}
\mathrm{I}&\leq\frac{2}{|B|}\int_{B}|T_{\Omega,\alpha}f_1(x)|\,dx\\
&\leq\frac{2}{|B|}\int_{B}\bigg(\int_{4B}\frac{|\Omega(x-y)|}{|x-y|^{n-\alpha}}|f(y)|\,dy\bigg)dx\\
&\leq\frac{2}{|B|}\int_{4B}|f(y)|\bigg(\int_{|x-y|<5r}\frac{|\Omega(x-y)|}{|x-y|^{n-\alpha}}\,dx\bigg)dy.
\end{split}
\end{equation*}
Since $\Omega\in L^s(\mathbf{S}^{n-1})$, we have $\Omega\in L^1(\mathbf{S}^{n-1})$, and
\begin{equation*}
\|\Omega\|_{L^1(\mathbf{S}^{n-1})}\leq C\|\Omega\|_{L^s(\mathbf{S}^{n-1})}.
\end{equation*}
Note that $\alpha>0$. Using polar coordinates, we can deduce that
\begin{equation}\label{wang1}
\begin{split}
\int_{|x-y|<5r}\frac{|\Omega(x-y)|}{|x-y|^{n-\alpha}}\,dx
&=\int_{0}^{5r}\int_{\mathbf{S}^{n-1}}\frac{|\Omega(x')|}{\varrho^{n-\alpha}}\varrho^{n-1}d\sigma(x')d\varrho\\
&=C\cdot r^{\alpha}\|\Omega\|_{L^1(\mathbf{S}^{n-1})}\leq C|B|^{\alpha/n}\|\Omega\|_{L^s(\mathbf{S}^{n-1})}.
\end{split}
\end{equation}
Applying the H\"older inequality, we obtain
\begin{equation}\label{wang2}
\begin{split}
\int_{4B}|f(y)|\,dy
&\leq\bigg(\int_{4B}|f(y)|^pw(y)^p\,dy\bigg)^{1/p}\bigg(\int_{4B} w(y)^{-p'}dy\bigg)^{1/{p'}}.
\end{split}
\end{equation}
Notice that
\begin{equation}\label{cal3}
s'\Big(\frac{p}{s'}\Big)'=\frac{s'}{1-\frac{s'}{p}}=\frac{ps'}{p-s'}.
\end{equation}
By a simple calculation, we can see that when $s'\geq1$,
\begin{equation*}
\frac{ps'}{p-s'}\geq\frac{p}{p-1}=p'.
\end{equation*}
Using H\"older's inequality and the condition $w^{s'}\in A(p/{s'},q/{s'})$, we get
\begin{equation}\label{wang3}
\begin{split}
\bigg(\frac{1}{|4B|}\int_{4B} w(y)^{-p'}dy\bigg)^{1/{p'}}
&\leq\bigg(\frac{1}{|4B|}\int_{4B} w(y)^{-s'\big(\frac{p}{s'}\big)'}dy\bigg)^{\frac{1}{s'\big(\frac{p}{s'}\big)'}}\\
&=\bigg(\frac{1}{|4B|}\int_{4B}w^{s'}(y)^{-\big(\frac{p}{s'}\big)'}dy\bigg)^{\frac{1}{s'\big(\frac{p}{s'}\big)'}}\\
&\leq C\bigg(\frac{1}{|4B|}\int_{4B}w^{s'}(y)^{\frac{q}{s'}}dy\bigg)^{-\frac{s'}{q}\cdot\frac{1}{s'}}\\
&=C\bigg(\frac{1}{|4B|}\int_{4B}w(y)^{q}dy\bigg)^{-1/{q}}.
\end{split}
\end{equation}
A direct calculation shows that
\begin{equation}\label{cal}
\frac{1}{p'}+\frac{1}{q}=1-\frac{1}{p}+\frac{1}{q}=1-\frac{\alpha}{n}.
\end{equation}
Hence, from \eqref{wang1}, \eqref{wang2} and \eqref{wang3}, it follows that
\begin{equation*}
\begin{split}
\mathrm{I}&\leq C\cdot\frac{1}{|B|^{1-\alpha/n}}\int_{4B}|f(y)|\,dy\\
&\leq C\cdot\frac{1}{|B|^{1-\alpha/n}}\cdot|4B|^{\frac{1}{p'}+\frac{1}{q}}
\bigg(\int_{4B}|f(y)|^pw(y)^pdy\bigg)^{1/p}\bigg(\int_{4B}w(y)^{q}dy\bigg)^{-1/{q}}\\
&\leq C\cdot\frac{w^q(4B)^{\kappa/p}}{w^q(4B)^{1/q}}\|f\|_{L^{p,\kappa}(w^p,w^q)}=C\|f\|_{L^{p,\kappa}(w^p,w^q)},
\end{split}
\end{equation*}
where the last equality holds because $\kappa=p/q$. Now we turn our attention to the second term $\mathrm{II}$. It is easy to see that
\begin{equation*}
\begin{split}
\mathrm{II}&=\frac{1}{|B|}\int_{B}\big|T_{\Omega,\alpha}f_2(x)-(T_{\Omega,\alpha}f_2)_{B}\big|\,dx\\
&\leq\frac{1}{|B|}\int_{B}\bigg\{\frac{1}{|B|}\int_{B}\big|T_{\Omega,\alpha}f_2(x)-T_{\Omega,\alpha}f_2(y)\big|\,dy\bigg\}dx\\
&\leq\frac{1}{|B|}\int_{B}\bigg\{\frac{1}{|B|}\int_{B}\bigg(\sum_{j=2}^\infty\int_{2^{j+1}B\backslash 2^jB}
\Big|\frac{\Omega(x-z)}{|x-z|^{n-\alpha}}-\frac{\Omega(y-z)}{|y-z|^{n-\alpha}}\Big||f(z)|dz\bigg)dy\bigg\}dx.
\end{split}
\end{equation*}
Since
\begin{equation*}
\begin{split}
\Big|\frac{\Omega(x-z)}{|x-z|^{n-\alpha}}-\frac{\Omega(y-z)}{|y-z|^{n-\alpha}}\Big|
&\leq\Big|\frac{\Omega(x-z)}{|x-z|^{n-\alpha}}-\frac{\Omega(x_0-z)}{|x_0-z|^{n-\alpha}}\Big|\\
&+\Big|\frac{\Omega(x_0-z)}{|x_0-z|^{n-\alpha}}-\frac{\Omega(y-z)}{|y-z|^{n-\alpha}}\Big|,
\end{split}
\end{equation*}
then by using H\"{o}lder's inequality and Minkowski's inequality, we have
\begin{equation*}
\begin{split}
&\int_{2^{j+1}B\backslash 2^jB}
\Big|\frac{\Omega(x-z)}{|x-z|^{n-\alpha}}-\frac{\Omega(y-z)}{|y-z|^{n-\alpha}}\Big||f(z)|dz\\
&\leq\bigg(\int_{2^{j+1}B\backslash 2^jB}\Big|\frac{\Omega(x-z)}{|x-z|^{n-\alpha}}-\frac{\Omega(y-z)}{|y-z|^{n-\alpha}}\Big|^sdz\bigg)^{1/s}
\bigg(\int_{2^{j+1}B\backslash 2^jB}|f(z)|^{s'}dz\bigg)^{1/{s'}}\\
&\leq\bigg(\int_{2^{j+1}B\backslash 2^jB}\Big|\frac{\Omega(x-z)}{|x-z|^{n-\alpha}}-\frac{\Omega(x_0-z)}{|x_0-z|^{n-\alpha}}\Big|^sdz\bigg)^{1/s}
\bigg(\int_{2^{j+1}B\backslash 2^jB}|f(z)|^{s'}dz\bigg)^{1/{s'}}\\
&+\bigg(\int_{2^{j+1}B\backslash 2^jB}\Big|\frac{\Omega(x_0-z)}{|x_0-z|^{n-\alpha}}-\frac{\Omega(y-z)}{|y-z|^{n-\alpha}}\Big|^sdz\bigg)^{1/s}
\bigg(\int_{2^{j+1}B\backslash 2^jB}|f(z)|^{s'}dz\bigg)^{1/{s'}}.
\end{split}
\end{equation*}
For any given $x\in B$ and $z\in 2^{j+1}B\backslash 2^jB$ with $j\geq2$, a trivial computation shows that
\begin{equation*}
|x-x_0|<\frac{\,1\,}{4}|z-x_0|.
\end{equation*}
According to Lemma \ref{lem1}(if we take $R=2^jr$), we get
\begin{equation*}
\begin{split}
&\bigg(\int_{2^{j+1}B\backslash 2^jB}\Big|\frac{\Omega(x-z)}{|x-z|^{n-\alpha}}-\frac{\Omega(x_0-z)}{|x_0-z|^{n-\alpha}}\Big|^sdz\bigg)^{1/s}\\
&=\bigg(\int_{2^jr\leq|z-x_0|<2^{j+1}r}\Big|\frac{\Omega(z-x_0-(x-x_0))}{|z-x_0-(x-x_0)|^{n-\alpha}}
-\frac{\Omega(z-x_0)}{|z-x_0|^{n-\alpha}}\Big|^sdz\bigg)^{1/s}\\
&\leq C(2^jr)^{n/s-(n-\alpha)}
\bigg[\frac{|x-x_0|}{2^jr}+\int_{\frac{|x-x_0|}{2^{j+1}r}}^{\frac{|x-x_0|}{2^jr}}\omega_s(\delta)\frac{d\delta}{\delta}\bigg]\\
&\leq C|2^{j+1}B|^{1/s-(1-\alpha/n)}
\bigg[\frac{1}{2^j}+\int_{\frac{|x-x_0|}{2^{j+1}r}}^{\frac{|x-x_0|}{2^jr}}\omega_s(\delta)\frac{d\delta}{\delta}\bigg].
\end{split}
\end{equation*}
Similarly, for any given $y\in B$, we can also obtain
\begin{equation*}
\begin{split}
&\bigg(\int_{2^{j+1}B\backslash 2^jB}\Big|\frac{\Omega(x_0-z)}{|x_0-z|^{n-\alpha}}-\frac{\Omega(y-z)}{|y-z|^{n-\alpha}}\Big|^sdz\bigg)^{1/s}\\
&\leq C(2^jr)^{n/s-(n-\alpha)}
\bigg[\frac{|y-x_0|}{2^jr}+\int_{\frac{|y-x_0|}{2^{j+1}r}}^{\frac{|y-x_0|}{2^jr}}\omega_s(\delta)\frac{d\delta}{\delta}\bigg]\\
&\leq C|2^{j+1}B|^{1/s-(1-\alpha/n)}
\bigg[\frac{1}{2^j}+\int_{\frac{|y-x_0|}{2^{j+1}r}}^{\frac{|y-x_0|}{2^jr}}\omega_s(\delta)\frac{d\delta}{\delta}\bigg].
\end{split}
\end{equation*}
On the other hand, we now claim that the following inequality holds:
\begin{align}\label{wang83}
&\bigg(\int_{2^{j+1}B\backslash 2^jB}|f(z)|^{s'}dz\bigg)^{1/{s'}}\\
&\leq C\bigg(\int_{2^{j+1}B}|f(z)|^{p}w(z)^{p}dz\bigg)^{1/p}
\bigg(\int_{2^{j+1}B}w(z)^{q}dz\bigg)^{-1/{q}}\times|2^{j+1}B|^{(1-\alpha/n)-1/s}\notag.
\end{align}
To prove \eqref{wang83}, we consider the following two cases: $p=s'$ and $p>s'$.

\textbf{Case 1:} $p=s'$. In this case, it follows from the condition $w^{p}\in A(1,q/{p})$ and \eqref{cal} that
\begin{equation*}
\begin{split}
&\bigg(\int_{2^{j+1}B\backslash 2^jB}|f(z)|^{s'}dz\bigg)^{1/{s'}}\\
&\leq\bigg(\int_{2^{j+1}B}|f(z)|^{p}w(z)^{p}w(z)^{-p}dz\bigg)^{1/{p}}\\
&\leq \bigg(\int_{2^{j+1}B}|f(z)|^{p}w(z)^{p}dz\bigg)^{1/p}\bigg(\underset{z\in 2^{j+1}B}{\mbox{ess\,sup}}\;\frac{1}{w^p(z)}\bigg)^{1/{p}}\\
&\leq C\bigg(\int_{2^{j+1}B}|f(z)|^{p}w(z)^{p}dz\bigg)^{1/p}
\bigg(\int_{2^{j+1}B}w(z)^{q}dz\bigg)^{-1/{q}}\times|2^{j+1}B|^{1/q}\\
&=C\bigg(\int_{2^{j+1}B}|f(z)|^{p}w(z)^{p}dz\bigg)^{1/p}
\bigg(\int_{2^{j+1}B}w(z)^{q}dz\bigg)^{-1/{q}}\times|2^{j+1}B|^{(1-\alpha/n)-1/s}.
\end{split}
\end{equation*}

\textbf{Case 2:} $p>s'$. Applying H\"{o}lder's inequality with exponent $t=p/{s'}>1$ and the condition $w^{s'}\in A(p/{s'},q/{s'})$, we get
\begin{equation*}
\begin{split}
&\bigg(\int_{2^{j+1}B\backslash 2^jB}|f(z)|^{s'}dz\bigg)^{1/{s'}}\leq\bigg(\int_{2^{j+1}B}|f(z)|^{s'}dz\bigg)^{1/{s'}}\\
&\leq\bigg(\int_{2^{j+1}B}|f(z)|^{p}w(z)^{p}dz\bigg)^{\frac{1}{s'}\cdot\frac{s'}{p}}
\bigg(\int_{2^{j+1}B}w^{s'}(z)^{-\big(\frac{p}{s'}\big)'}dz\bigg)^{\frac{1}{s'\big(\frac{p}{s'}\big)'}}\\
&\leq C\bigg(\int_{2^{j+1}B}|f(z)|^{p}w(z)^{p}dz\bigg)^{1/p}\bigg(\int_{2^{j+1}B}w(z)^{q}dz\bigg)^{-1/{q}}
\times\big|2^{j+1}B\big|^{\frac{1}{s'\big(\frac{p}{s'}\big)'}+\frac{1}{q}}.
\end{split}
\end{equation*}
It follows directly from \eqref{cal3} that
\begin{equation*}
\frac{1}{s'\big(\frac{p}{s'}\big)'}+\frac{\,1\,}{q}=\frac{p-s'}{ps'}+\frac{\,1\,}{q}=\Big(1-\frac{\alpha}{n}\Big)-\frac{\,1\,}{s}.
\end{equation*}
Thus, in both cases \eqref{wang83} holds. Consequently, for any $x\in B$ and $y\in B$, we can deduce that
\begin{equation*}
\begin{split}
&\sum_{j=2}^\infty\int_{2^{j+1}B\backslash 2^jB}
\Big|\frac{\Omega(x-z)}{|x-z|^{n-\alpha}}-\frac{\Omega(y-z)}{|y-z|^{n-\alpha}}\Big||f(z)|\,dz\\
&\leq C\sum_{j=2}^\infty
\bigg[\frac{1}{2^{j-1}}
+\int_{\frac{|x-x_0|}{2^{j+1}r}}^{\frac{|x-x_0|}{2^jr}}\omega_s(\delta)\frac{d\delta}{\delta}
+\int_{\frac{|y-x_0|}{2^{j+1}r}}^{\frac{|y-x_0|}{2^jr}}\omega_s(\delta)\frac{d\delta}{\delta}\bigg]\\
&\times \bigg(\int_{2^{j+1}B}|f(z)|^{p}w(z)^{p}dz\bigg)^{1/p}\bigg(\int_{2^{j+1}B}w(z)^{q}dz\bigg)^{-1/{q}}\\
&\leq C\sum_{j=2}^\infty
\bigg[\frac{1}{2^{j-1}}+\int_{\frac{|x-x_0|}{2^{j+1}r}}^{\frac{|x-x_0|}{2^jr}}\omega_s(\delta)\frac{d\delta}{\delta}
+\int_{\frac{|y-x_0|}{2^{j+1}r}}^{\frac{|y-x_0|}{2^jr}}\omega_s(\delta)\frac{d\delta}{\delta}\bigg]\\
&\times\frac{w^q(2^{j+1}B)^{\kappa/p}}{w^q(2^{j+1}B)^{1/q}}\|f\|_{L^{p,\kappa}(w^p,w^q)}\\
&=C\bigg[1+2\int_0^1\omega_s(\delta)\frac{d\delta}{\delta}\bigg]\|f\|_{L^{p,\kappa}(w^p,w^q)},
\end{split}
\end{equation*}
where the last equality follows from the assumption that $\kappa=p/q$. Therefore, we obtain
\begin{equation*}
\begin{split}
\mathrm{II}&\leq\frac{1}{|B|}\int_{B}\bigg\{\frac{1}{|B|}\int_{B}\bigg(\sum_{j=2}^\infty\int_{2^{j+1}B\backslash 2^jB}
\Big|\frac{\Omega(x-z)}{|x-z|^{n-\alpha}}-\frac{\Omega(y-z)}{|y-z|^{n-\alpha}}\Big||f(z)|\,dz\bigg)dy\bigg\}dx\\
&\leq C\bigg[1+2\int_0^1\omega_s(\delta)\frac{d\delta}{\delta}\bigg]\|f\|_{L^{p,\kappa}(w^p,w^q)}.
\end{split}
\end{equation*}
Combining the above estimates for I and II and taking the supremum over all balls $B$, we conclude the proof of Theorem \ref{thm2}.
\end{proof}

By using the same arguments as in the proof of Theorems \ref{thm1} and \ref{thm2}, we can also show the following result. The proof needs appropriate but only minor modifications and we leave this to the interested reader.
\begin{thm}\label{thm3}
Suppose that $\Omega\in L^s(\mathbf{S}^{n-1})$ with $1<s\le\infty$. If $0<\alpha<n$, $s'\leq p<n/{\alpha}$, $1/q=1/p-{\alpha}/n$, $\kappa=p/q=1-{(\alpha p)}/n$ and $w^{s'}\in A(p/{s'},q/{s'})$, then the fractional maximal operator $M_{\Omega,\alpha}$ with homogeneous kernel is bounded from $L^{p,\kappa}(w^p,w^q)$ to $L^{\infty}(\mathbb R^n)$.
\end{thm}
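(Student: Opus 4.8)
The plan is to prove the estimate directly at the level of a single scale. Fix an arbitrary $x\in\mathbb R^n$ and $r>0$; I will bound the average
\[
\frac{1}{|B(x,r)|^{1-\alpha/n}}\int_{|x-y|<r}|\Omega(x-y)||f(y)|\,dy
\]
by $C\|f\|_{L^{p,\kappa}(w^p,w^q)}$ with a constant $C$ independent of both $x$ and $r$. Taking the supremum over $r>0$ and then the essential supremum over $x$ will then give the desired $L^{\infty}(\mathbb R^n)$ bound. In contrast to Theorem \ref{thm1}, the target space here is the \emph{unweighted} $L^{\infty}(\mathbb R^n)$, so the whole argument must be arranged so that every weight factor cancels exactly at the end.

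First I would apply H\"older's inequality with the conjugate pair $(s,s')$ to separate the kernel from $f$:
\[
\int_{|x-y|<r}|\Omega(x-y)||f(y)|\,dy\leq\bigg(\int_{|x-y|<r}|\Omega(x-y)|^s\,dy\bigg)^{1/s}\bigg(\int_{B(x,r)}|f(y)|^{s'}\,dy\bigg)^{1/{s'}}.
\]
The first factor is handled exactly as in \eqref{omega} by passing to polar coordinates, which gives $C|B(x,r)|^{1/s}\|\Omega\|_{L^s(\mathbf{S}^{n-1})}$. For the second factor I would invoke the key estimate \eqref{wang83}: although it is stated there for the annulus $2^{j+1}B\setminus 2^{j}B$, its proof proceeds by enlarging the domain to the full ball, so it applies verbatim with $B=B(x,r)$ in place of $2^{j+1}B$. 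This is precisely where the hypothesis $w^{s'}\in A(p/{s'},q/{s'})$ enters (through the two cases $p=s'$ and $p>s'$ already treated), and it yields
\[
\bigg(\int_{B}|f|^{s'}\bigg)^{1/{s'}}\leq C\bigg(\int_B|f|^pw^p\bigg)^{1/p}\bigg(\int_Bw^q\bigg)^{-1/q}|B|^{(1-\alpha/n)-1/s}.
\]

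Combining the prefactor $|B|^{-(1-\alpha/n)}$ with the two estimates, the powers of $|B|$ collect as $-(1-\alpha/n)+1/s+(1-\alpha/n)-1/s=0$, so the normalising factor is cancelled exactly and I am left with $C\|\Omega\|_{L^s(\mathbf{S}^{n-1})}\big(\int_B|f|^pw^p\big)^{1/p}\big(\int_Bw^q\big)^{-1/q}$. Finally, bounding $\big(\int_B|f|^pw^p\big)^{1/p}\leq w^q(B)^{\kappa/p}\|f\|_{L^{p,\kappa}(w^p,w^q)}$ by the definition of the Morrey norm and writing $\big(\int_Bw^q\big)^{-1/q}=w^q(B)^{-1/q}$, the surviving weight factor is $w^q(B)^{\kappa/p-1/q}$, which equals $1$ precisely because $\kappa=p/q$. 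This is the step that forces the range to be the unweighted $L^{\infty}$ and in which the critical relation $\kappa=p/q=1-(\alpha p)/n$ is used.

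I do not anticipate a genuine obstacle, since the argument is an assembly of the polar-coordinate computation from Theorem \ref{thm1} and the estimate \eqref{wang83} from Theorem \ref{thm2}, with the two identities $1/q=1/p-\alpha/n$ and $\kappa=p/q$ supplying the exact cancellation of the powers of $|B|$ and of $w^q(B)$ respectively. The only point demanding real care is the exponent bookkeeping — verifying that both cancellations are exact — because a mismatch would either leave a scale-dependent power of $|B|$, destroying uniformity in $r$, or a residual power of $w^q(B)$, which would push the conclusion back into a weighted $L^{\infty}(w)$ rather than the unweighted $L^{\infty}(\mathbb R^n)$ asserted in the statement.
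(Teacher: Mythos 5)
Your proposal is correct and is exactly the argument the paper has in mind: the paper omits the proof of Theorem \ref{thm3}, stating it follows from the arguments of Theorems \ref{thm1} and \ref{thm2}, and you reconstruct precisely that assembly --- H\"older with $(s,s')$, the polar-coordinate kernel bound \eqref{omega}, the estimate \eqref{wang83} applied to the ball $B(x,r)$ (valid there, since its proof only uses the $A(p/{s'},q/{s'})$ condition on balls, including the $p=s'$ case), and the exact cancellations from $1/q=1/p-\alpha/n$ and $\kappa=p/q$. Your exponent bookkeeping checks out, with only the trivial remark that for $s=\infty$ the H\"older step degenerates to the pointwise bound $|\Omega(x-y)|\leq\|\Omega\|_{L^{\infty}(\mathbf{S}^{n-1})}$ in place of \eqref{omega}.
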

In particular, when $\Omega\equiv1$ and $s=\infty$, as a direct consequence of Theorem \ref{thm3}, we have
\begin{cor}
If $0<\alpha<n$, $1\leq p<n/{\alpha}$, $1/q=1/p-{\alpha}/n$, $\kappa=p/q=1-{(\alpha p)}/n$ and $w\in A(p,q)$, then the fractional maximal operator $M_{\alpha}$ is bounded from $L^{p,\kappa}(w^p,w^q)$ to $L^{\infty}(\mathbb R^n)$.
\end{cor}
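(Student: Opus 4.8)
The plan is to obtain a pointwise bound $M_{\Omega,\alpha}f(x)\le C\|f\|_{L^{p,\kappa}(w^p,w^q)}$ valid for every $x\in\mathbb R^n$, which immediately yields membership in $L^{\infty}(\mathbb R^n)$ after taking the essential supremum. Unlike the proof of Theorem \ref{thm2}, there is no far-field oscillation term to control, so the argument collapses to the single-ball estimate already carried out for the near term $\mathrm{I}$ and in the proof of Theorem \ref{thm1}; in particular no smoothness of $\Omega$ is needed, only $\Omega\in L^s(\mathbf S^{n-1})$.

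First I would fix an arbitrary ball $B=B(x,r)$ and estimate the local average
\[
\frac{1}{|B|^{1-\alpha/n}}\int_{|x-y|<r}|\Omega(x-y)||f(y)|\,dy.
\]
Applying H\"older's inequality with the conjugate pair $(s,s')$ separates the kernel from $f$, and the polar-coordinate computation \eqref{omega} gives $\bigl(\int_B|\Omega(x-y)|^s\,dy\bigr)^{1/s}=C|B|^{1/s}\|\Omega\|_{L^s(\mathbf S^{n-1})}$. For the remaining factor $\bigl(\int_B|f(y)|^{s'}\,dy\bigr)^{1/s'}$ I would invoke exactly estimate \eqref{wang83}, now applied to $B$ itself rather than to a dyadic annulus; its derivation splits into the two cases $p=s'$ (using $w^p\in A(1,q/p)$) and $p>s'$ (using $w^{s'}\in A(p/s',q/s')$ together with H\"older at exponent $p/s'$), and is verbatim the computation already performed.

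Combining the two factors, the powers of $|B|$ cancel exactly — the net exponent being $-(1-\alpha/n)+\tfrac1s+\bigl[(1-\alpha/n)-\tfrac1s\bigr]=0$ — which leaves
\[
\frac{1}{|B|^{1-\alpha/n}}\int_{|x-y|<r}|\Omega(x-y)||f(y)|\,dy
\le C\Bigl(\int_B|f|^pw^p\Bigr)^{1/p}\,w^q(B)^{-1/q}.
\]
Finally I would insert the defining bound $\bigl(\int_B|f|^pw^p\bigr)^{1/p}\le w^q(B)^{\kappa/p}\|f\|_{L^{p,\kappa}(w^p,w^q)}$; since $\kappa=p/q$ forces $\kappa/p-1/q=0$, the surviving weight factor $w^q(B)^{\kappa/p-1/q}$ equals $1$ and the right-hand side reduces to $C\|f\|_{L^{p,\kappa}(w^p,w^q)}$, uniformly in $r$. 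Taking the supremum over $r>0$ and then the essential supremum over $x$ completes the argument.

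The only place the hypothesis $\kappa=p/q=1-\alpha p/n$ is genuinely used is this simultaneous cancellation of the $|B|$-powers and of the $w^q(B)$-powers; everything else is a transcription of the near-term estimate of Theorem \ref{thm2} and of the computation in Theorem \ref{thm1}. Accordingly there is no substantial obstacle, which is precisely why only minor modifications to the earlier proofs are required.
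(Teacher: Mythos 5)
Your proof is correct and takes essentially the approach the paper intends: the paper obtains this corollary by specializing Theorem \ref{thm3} to $\Omega\equiv1$, $s=\infty$, and Theorem \ref{thm3} itself is left to the reader as ``the same arguments with minor modifications,'' which is exactly what you supply --- the single-ball H\"older estimate from Theorem \ref{thm1} combined with the kernel computation \eqref{omega} and the two-case estimate \eqref{wang83} applied to $B$ itself. Your exponent bookkeeping (the cancellation $-(1-\alpha/n)+\tfrac1s+(1-\alpha/n)-\tfrac1s=0$ and $w^q(B)^{\kappa/p-1/q}=1$ from $\kappa=p/q$) matches the paper's computations \eqref{cal3} and \eqref{cal}, so nothing is missing.
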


\bibliographystyle{elsarticle-harv}
\bibliography{<your bibdatabase>}
\begin{center}
References
\end{center}

\end{document}